\numberwithin{equation}{section}
\numberwithin{figure}{section}
\theoremstyle{plain}
\newtheorem{thm}{\protect\theoremname}[section]
  \theoremstyle{plain}
  \newtheorem{lem}[thm]{\protect\lemmaname}
  \theoremstyle{remark}
  \newtheorem{rem}[thm]{\protect\remarkname}
  \theoremstyle{definition}
  \theoremstyle{plain}
  \newtheorem{cor}[thm]{\protect\corollaryname}
  \theoremstyle{plain}
  \newtheorem{prop}[thm]{Proposition}
  \theoremstyle{definition}
  \newtheorem{example}[thm]{\protect\examplename}
\def\QQ{\mathbb{Q}}
\def\RR{\mathbb{R}}
\def\CC{\mathbb{C}}
\def\ZZ{\mathbb{Z}}
\def\PP{\mathbb{P}}
\def\b{\bullet}
\def\ay{\mathbf{i}}
\def\B{\mathcal{B}}
\def\e{\epsilon}
\def\ee{\varepsilon}
\def\ue{\underline{\epsilon}}
\def\l{\lambda}
\def\ul{\underline{\lambda}}
\newcommand*{\@old@slash}{}\let\@old@slash\slash
\def\slash{\relax\ifmmode\delimiter"502F30E\mathopen{}\else\@old@slash\fi}
\newcommand{\noun}[1]{\textsc{#1}}
  \providecommand{\corollaryname}{Corollary}
  \providecommand{\examplename}{Example}
  \providecommand{\remarkname}{Remark}
\providecommand{\theoremname}{Theorem}
\providecommand{\lemmaname}{Lemma}
\begin{document}

\title[Integral regulators]{Two applications of the integral regulator}
\author{Matt Kerr and Muxi Li}
\subjclass[2000]{14C25, 14C30, 19E15}
\begin{abstract}
We review Li's refinement of the KLM regulator map, and use it to detect torsion phenomena in higher Chow groups.
\end{abstract}

\maketitle

\section{Introduction}

The KLM formula is a morphism of complexes inducing the Bloch-Beilinson regulator map with rational coefficients, developed by the first author together with J. Lewis and S. M\"uller-Stach \cite{Ke1,KLM,KL} (see $\S$3).  The second author's refinement now enables the direct computation of the \emph{integral} regulator on the level of higher Chow complexes \cite{Li}. In this note, we shall briefly review that construction ($\S$4) and show how it may be used to find explicit torsion generators in higher Chow groups of number fields ($\S$5).  We also apply the formula to \emph{integrally} calculate a branch of the higher normal function arising from the mirror of local $\PP^2$ ($\S$6).

\subsection*{Acknowledgments}

We thank the National Science Foundation for support under the aegis of FRG Grant DMS-1361147, and C. Weibel for helpful correspondence.

\section{Higher Chow groups}

Invented by Spencer Bloch \cite{Bl,Bl2} in the mid-1980s to geometrize Quillen's higher algebraic $K$-theory, these generalize the usual Chow groups of cycles modulo rational equivalence (the $n=0$ case). In particular, for $X$ smooth quasi-projective over an infinite field $k$, they satisfy $${CH}^p(X,n)\otimes \QQ \cong \mathrm{Gr}^p_{\gamma}K_n^{\text{alg}}(X)\otimes \QQ.$$ For such $X$, Voevodsky \cite{Vo} proved they were \emph{integrally} isomorphic to his motivic cohomology groups: $${CH}^p(X,n)\cong H^{2p-n}_{\mathcal{M}} (X,\ZZ(p)).$$
Beyond their role in arithmetic geometry (e.g. Beilinson's conjectures \cite{Be1}), they have recently shown up in several branches of physics (e.g. quantum field theory \cite{BKV1} and topological string theory \cite{7K}) and mirror symmetry \cite{DK2,BKV2}. We focus on the cubical presentation of ${CH}^p(X,n)$ as the $n^{\text{th}}$ homology of a complex of \emph{higher Chow precycles} \cite{Le}
$$\cdots \to Z^p(X,n+1)\overset{\partial}{\to} Z^p(X,n) \overset{\partial}{\to} Z^p(X,n-1)\to \cdots$$
or its (integrally quasi-isomorphic) subcomplex of \emph{normalized precycles} \cite{Bl4}
$$\cdots \to N^p(X,n+1)\overset{\partial}{\to} N^p(X,n) \overset{\partial}{\to} N^p(X,n-1)\to \cdots .$$

A \emph{higher Chow cycle} is an element of $\ker(\partial)$. Roughly speaking, these are relative codimension-$p$ cycles on
$$(X\times \mathbb{A}^n,X\times \cup \mathbb{A}^{n-1})$$
where the $\mathbb{A}^{n-1}$'s are inserted into $\mathbb{A}^n$ as a ``cubical'' configuration of hyperplanes. More precisely, writing
$$\square^n :=(\PP^1\setminus \{1\})^n \supset \partial \square^n := \bigcup_i \{z_i=0\text{ or }\infty\}$$
we set\footnote{Normalized precycles may be represented (in $Z^p(X,n)$) by $Z$ satisfying $Z\cdot\{z_i =0\}=0$ ($\forall i$) and $Z\cdot \{z_i=\infty\}=0$ ($i<n$) simply by adding an element of $d^p(X,n)$.}
\begin{flalign*}
c^p(X,n) &:= \left\{\text{cycles meeting faces of }X\times\partial\square^n\text{ properly}\right\}\\
d^p(X,n) &:= \left\{\text{cycles ``constant'' in some }z_i\right\}\\
Z^p(X,n) &:= c^p(X,n)/d^p(X,n)\\
N^p(X,n) &:= \left\{Z\mid Z\cdot\{z_i=0\}=Z\cdot\{z_i=\infty\}=0\, (\forall i<n)\right\}
\end{flalign*}
and for $Z\in Z^p(X,n)$ or $N^p(X,n)$,
$$\partial Z := \sum_{i=1}^n (-1)^i \left( Z\cdot \{z_i=\infty\}-Z\cdot \{z_i=0\}\right).$$
If $X=\text{Spec}(k)$, write $Z^p(k,n)$ etc. for short.

\begin{example} \label{ex1}
Parametrize a cycle in $N^2(\QQ(\zeta_{\ell}),3)$ by $t\in \PP^1$:
$$Z^2_{\ell} :=\left( 1-\frac{\zeta_{\ell}}{t},1-t,t^{-{\ell}}\right) . $$
Intersections with facets $\{z_i=0,\infty\}$ are given by $t=0,1,\zeta_{\ell},\infty$. But all these intersections have some $z_j=1$, so are trivial (as $1\notin\square$).
We also record the cycle 
$$\mathscr{Z}^2_5 :=Z^2_1 +\left( 1-\frac{\zeta_5}{t},1-t,t^{-5}\right)+\left(1-\frac{\overline{\zeta_5}}{t},1-t,t^{-5}\right)$$
in $N^2(\QQ(\sqrt{5}),3)$ for later reference.
\end{example}

\section{Abel-Jacobi maps}

These simultaneously generalize two classical invariants:

\begin{enumerate}
\item Griffiths's AJ map \cite{Gr}
$${CH}^p(X,0)\to H_{\mathscr{D}}^{2p}(X,\ZZ(p))$$ for $X$ smooth projective over $\CC$; and
\item ${}$[A $\ZZ$-lift of] Borel's regulator map \cite{Bo2,Bu} $${CH}^p(k,2p-1)\to \CC/\ZZ(p)$$ for $k\subset\CC$ a number field.
\end{enumerate}

\noindent Defined abstractly by Bloch \cite{Bl3}, they map higher Chow groups to Deligne cohomology:\footnote{or (better) to absolute Hodge cohomology \cite[$\S$2]{KL} in the smooth quasiprojective case.}
$${CH}^p(X,n)\overset{{AJ}^{p,n}}{\to}H_{\mathscr{D}}^{2p-n}(X,\ZZ(p)).$$
Kerr, Lewis, and M\"uller-Stach \cite{KLM} constructed a morphism of complexes
\begin{flalign*}
\widetilde{AJ}^{p,-\bullet}_{\text{KLM}}:\, Z^p_{\RR}(X,-\bullet) & \;\;\longrightarrow\;\;C_{\mathscr{D}}^{2p+\bullet}(X,\ZZ(p)):=
\\
& C_{\text{sing}}^{2p+\bullet}(X;\ZZ(p))\oplus F^p D^{2p+\bullet}(X)\oplus D^{2p-1+\bullet}(X)
\end{flalign*}
with differential $D(\alpha,\beta,\gamma)=(-\partial\alpha,-d\beta,d\gamma-\beta+\alpha)$ on the right.
For $Z\in Z^p_{\RR}(X,n)$ with projections $\pi_1$ (to $\square^n$) and $\pi_2$ (to $X$), 
they define
\begin{flalign*}
\widetilde{AJ}^{p,n}_{\text{KLM}}(Z) &:= (2\pi \ay)^{p-n} \left( (2\pi\ay)^n T_Z,\Omega_Z,R_Z\right)\\
&:= (2\pi\ay)^{p-n} (\pi_2)_*(\pi_1)^*\left( (2\pi\ay)^n T_n,\Omega_n,R_n\right)
\end{flalign*}
where
$T_n:=\bigcap_{i=1}^n T_{z_i} = \RR_{<0}^{\times n}$, \;\;$\Omega_n :=\tfrac{dz_1}{z_1}\wedge \cdots \wedge \tfrac{dz_n}{z_n}$, \;and
$$R_n := \log(z_1)\tfrac{dz_2}{z_2}\wedge \cdots \wedge \tfrac{dz_n}{z_n} - (-1)^n (2\pi\ay)R_{n-1}\cdot \delta_{T_{z_1}} .$$
Here $\log(z)$ has a branch cut along $T_z = \{z\in\RR_{<0}\}$, and $R_1 = \log(z)$. (Note that $\widetilde{AJ}^{p,n}_{\text{KLM}}$ vanishes identically on $d^p(X,n)$.)

The subcomplex $Z^p_{\RR}(X,-\bullet)\subset Z^p(X,-\bullet)$ consists of cycles $Z$ for which $Z^{\text{an}}$ properly intersects the various combinations of $\{T_{z_i}\}$ and $\{z_j=0,\infty\}$. We call such precycles $\RR$-\emph{proper}.
Kerr and Lewis \cite{KL} proved the inclusion is a \emph{rational} quasi-isomorphism, by appealing to Kleiman transversality in $K$-theory. Unfortunately, the claimed integral moving lemma in \cite{KLM} (which would have made this quasi-isomorphism integral) was incorrect, and \cite{KL} was only written after a prolonged effort to repair the integral version.

Now suppose we have a cycle $Z\in \ker(\partial)\subset Z^p_{\RR}(X,n)$ with $$[\widetilde{AJ}^{p,n}_{\text{KLM}}(Z)]\in H^{2p-n}_{\mathscr{D}}(X,\ZZ(p))$$ torsion of order $M$.  This implies $[Z]\in H_n\{Z^p_{\RR}(X,\bullet)\}$ is at least of this order. But for $[Z]\in H_n\{Z^p(X,\bullet)\}={CH}^p(X,n)$, it means no such thing: there could be a $W\in Z^p(X,n+1)\setminus Z^p_{\RR}(X,n+1)$ with $\partial W=Z$. So the KLM map only induces a homomorphism
$${AJ}^{p,n}_{\QQ}:\,{CH}^p (X,n)\to H_{\mathscr{D}}^{2p-n}(X,\QQ(p))$$
consistent with Bloch's ${AJ}^{p,n}$. This is frustrating, as the KLM formulas are well-adapted to detecting torsion!

For $X=\text{Spec}(k)$ and $(p,n)=(2,3)$, consider the portion
\[\xymatrix@C=1em{
\cdots \ar [r] & Z^2_{\RR}(k,4) \ar [r] \ar [d]^{(2\pi\ay)^2 W\cdot T_{4}} &  Z^p_{\RR}(k,3) \ar [r]\ar[d]^{\frac{1}{2\pi\ay}\int_Z R_{2p-1}} & Z^p_{\RR}(k,2) \ar [r] \ar[d]^0 & \cdots \\
\cdots \ar [r] & \ZZ(2)  \ar @{^(->} [r] & \CC \ar [r] & 0 \ar [r] & \cdots 
}\]
of the KLM map of complexes. We want to use the middle map to detect torsion. Denote its image on a cycle $Z$ by $\mathscr{R}(Z)\in\CC/\ZZ(2)$.

\begin{example}[Petras \cite{Pe2}] \label{ex2}

We calculate $\mathscr{R}(Z_{\ell}^2)=$
\begin{flalign*}
\frac{1}{2\pi\ay}\int_{Z^2_{\ell}}R_3 &=\frac{1}{2\pi\ay} \int_{Z_{\ell}^2} \begin{pmatrix}\log(z_1) {dz_2}/{z_2}\wedge {dz_3}/{z_3} + \\ (2\pi\ay) \log(z_2){dz_3}/{z_3} \cdot \delta_{T_{z_1}} \\ +(2\pi\ay)^2 \log(z_3) \delta_{T_{z_1} \cap T_{z_2}} \end{pmatrix} \\  
&=\int_{Z_{\ell}\cap T_{z_1}} \log(z_2)\frac{dz_3}{z_3} \; = \; -\int_{T_{1-\frac{\zeta_{\ell}}{t}}}\log(1-t)\frac{dt}{t}   \\ 
&=-\int_0^{\zeta_{\ell}} \log(1-t) \frac{dt}{t} \;=\; \text{Li}_2(\zeta_{\ell}).
\end{flalign*}
For $\ell=1$, this is $\frac{\pi^2}{6}\in\CC/\ZZ(2)$, which is $24$-torsion, while (for the second cycle of Example \ref{ex1})  $\mathscr{R}(\mathscr{Z}^2_5)=\text{Li}_2(1)+\text{Li}_2(\zeta_5)+\text{Li}_2(\overline{\zeta_5})=\frac{7\pi^2}{30}$ is $120$-torsion.  To \emph{deduce} that these orders of torsion exist in ${CH}^2(\QQ,3)$ resp. ${CH}^2(\QQ(\sqrt{5}),3)$, we need an improvement in technology.
\end{example}

\section{The integral regulator}

A few basic strategies come to mind:\vspace{0.1cm}

\noindent {\bf (1)} proving an integral moving lemma ($Z^p_{\RR}(X,\bullet)\overset{\simeq}{\to}Z^p(X,\bullet)$)

\noindent and

\noindent {\bf (2)} extending KLM to a map of complexes on $Z^p(X,\bullet)$

\vspace{0.1cm}

\noindent are probably too naive;

\vspace{0.1cm}

\noindent{\bf (3)} extending KLM to an infinite family of homotopic maps on nested subcomplexes with union $Z^p(X,\bullet)$

\vspace{0.1cm}

\noindent seemed promising; but what ultimately worked was

\vspace{0.1cm}

\noindent{\bf (4)} extending KLM to an infinite family of homotopic maps on nested subcomplexes with union $N^p(X,\bullet).$

The heuristic idea of {\bf (3)} was to perturb the branch cuts $T_{z_i}=\{ z_i \in \RR_{<0}\}$ in $\log(z_i)$ to $T_{z_i}^{\e}=\{ z_i /e^{\ay\e}\in \RR_{<0}\}$ and take a limit as $\e\to 0$, an approach that had been successfully applied in \cite[$\S$9]{Ke}. Unfortunately, there are cycles in $Z^2(\CC,3)$ whose intersection with $T_{z_1}^{\e}\cap T_{z_2}^{\e}\cap T_{z_3}^{\e}$ is improper for every real $\e$ near $0$ \cite[$\S$3]{Li}. So we need to deform the branches by distinct $\{\e_i\}$; but then we cannot expect a morphism of complexes (or ``limit'' thereof) on $Z^p(X,\bullet)$.  This forces us into strategy {\bf (4)}, and working with normalized subcomplexes.

Let $\B_{\ee}$ denote the set of infinite sequences $\{\e_i\}_{i>0}$, with
$$0<\e_1<\ee,\;\;\;0<\e_2<e^{-1/\e_1},\;\;\;0<\e_3<e^{-1/\e_2},\; \text{etc.},$$
so that when $\ee\to 0$ its projection to any $(S^1 )^n$ eventually avoids any given analytic subvariety. Let $N^p_{\ee}(X,\b)\subset N^p(X,\b)$ denote the (nested) subcomplexes of cycles $Z$ with $Z^{\text{an}}$ properly intersecting (for each $\ue\in \B_{\ee}$) certain\footnote{namely, $\left( \cap_{i\in I}\{ T^{\epsilon_i}_{z_i}\}\right) \cap \left( \cap_{j\in J}\{ z_j =0,\infty\}\right)$ where $I$ and $K\setminus I$ are consecutive (no gaps) in $K=\{1,\ldots,n\}\setminus J$ (e.g., $T_{z_1}^{\epsilon_1}\cap \{z_2 =0\} \cap T^{\epsilon_3}_{z_3}\cap\{z_7=\infty\}$).} combinations of $\{T_{z_i}^{\e_i}\}$ and $\{z_j =0,\infty\}$.

\begin{lem}[\cite{Li}, Thms. 4.2 and 7.2] \label{lem1}
We have $$\bigcup_{\ee>0} N^p_{\ee}(X,n)=N^p(X,n)\;\;\;\;(\forall n)$$
and
$$\lim_{\ee\to 0}H_n(N^p_{\ee}(X,\b))\cong H_n(N^p(X,\b))\cong {CH}^p(X,n).$$
\end{lem}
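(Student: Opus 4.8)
The plan is to treat the two displayed isomorphisms separately: I would derive the exhaustion $\bigcup_{\ee>0}N^p_{\ee}(X,n)=N^p(X,n)$ from a genericity statement built into the definition of $\B_{\ee}$, and then obtain the limit on homology from the general principle that homology commutes with filtered colimits, followed by the cited identification of the normalized complex with ${CH}^p(X,n)$.

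For the exhaustion, fix $Z\in N^p(X,n)$. Since $N^p_{\ee}(X,n)\subseteq N^p(X,n)$ holds by definition, I only need to produce some $\ee>0$ with $Z\in N^p_{\ee}(X,n)$. The proper intersections with $\{z_j=0,\infty\}$ already hold because $Z\in c^p(X,n)$, so the real content is properness of $Z^{\text{an}}$ against the finitely many admissible combinations $\left(\cap_{i\in I}T^{\e_i}_{z_i}\right)\cap\left(\cap_{j\in J}\{z_j=0,\infty\}\right)$. First I would show that, for each such combination, the set of angle-tuples $(\e_i)_{i\in I}$ for which the intersection is \emph{improper} is a proper (real-)analytic subvariety $V_{I,J}$ of the torus $(S^1)^{|I|}$: holding $Z$ fixed and rotating the branch cuts $T^{\e_i}_{z_i}$ produces a real-analytic family of walls, and a dimension count shows that improper incidence is a closed condition of positive codimension. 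This is precisely where allowing the $\e_i$ to be \emph{independent} is essential, since the diagonal $\e_1=\cdots=\e_n$ may lie entirely inside the bad locus, per the counterexample of \cite[$\S$3]{Li}. Taking the finite union $V:=\bigcup_{I,J}V_{I,J}$, the defining property of $\B_{\ee}$ — that as $\ee\to 0$ its projection to any $(S^1)^m$ eventually avoids any prescribed analytic subvariety — furnishes an $\ee$ so small that every $\ue\in\B_{\ee}$ projects off $V$. Then all required intersections are proper, i.e. $Z\in N^p_{\ee}(X,n)$.

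For the limit statement, I would first record that the $N^p_{\ee}(X,\b)$ increase as $\ee$ decreases: shrinking $\ee$ shrinks $\B_{\ee}$, hence relaxes the properness constraints and enlarges $N^p_{\ee}$. They therefore form a directed system of subcomplexes whose colimit is $\varinjlim_{\ee\to 0}N^p_{\ee}(X,\b)=\bigcup_{\ee>0}N^p_{\ee}(X,\b)=N^p(X,\b)$, the last equality being the first part of the lemma. Since filtered colimits are exact in the category of abelian groups, they commute with the formation of $H_n=\ker\partial/\operatorname{im}\partial$; hence
$$\varinjlim_{\ee\to 0}H_n(N^p_{\ee}(X,\b))\cong H_n\Big(\varinjlim_{\ee\to 0}N^p_{\ee}(X,\b)\Big)=H_n(N^p(X,\b)).$$
The final isomorphism $H_n(N^p(X,\b))\cong{CH}^p(X,n)$ is the integral quasi-isomorphism of the normalized precycle complex with the full Bloch complex \cite{Bl4} already invoked in $\S$2.

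I expect the genuine obstacle to be the codimension estimate asserting that each $V_{I,J}$ is nowhere dense, rather than the colimit bookkeeping. The subtlety is that an arbitrary precycle can be badly degenerate along the real branch loci, and one must rule out that \emph{every} admissible tuple of independent rotations leaves some intersection improper; controlling this uniformly over all admissible incidence patterns $(I,J)$, and confirming that the super-exponential decay defining $\B_{\ee}$ indeed clears the resulting analytic bad set, is the crux — and is exactly the content extracted from \cite[Thms. 4.2 and 7.2]{Li}.
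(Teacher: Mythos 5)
The paper does not actually prove this lemma: it is imported wholesale from \cite{Li} (Thms.~4.2 and 7.2), so there is no in-text argument to compare yours against. Judged on its own terms, your reconstruction is consistent with everything the paper says about the construction, and the second half is complete: the $N^p_{\ee}(X,\b)$ are nested (shrinking $\ee$ shrinks $\B_{\ee}$ and so weakens the properness requirement), filtered colimits of abelian groups are exact and hence commute with $H_n$, and the identification $H_n(N^p(X,\b))\cong CH^p(X,n)$ is the integral normalization quasi-isomorphism already cited in $\S$2. That part needs no further input.

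The exhaustion half is where the real content sits, and you have correctly located it but not supplied it. Two points deserve more care than your sketch gives them. First, the bad locus $V_{I,J}$ is the fiber-dimension jump locus of the real-analytic ``argument'' map from (a stratum of) $Z^{\mathrm{an}}\cap\bigl(\cap_{j\in J}\{z_j=0,\infty\}\bigr)$ to $(S^1)^{|I|}$; for real-analytic (as opposed to complex-analytic) maps this locus is in general only subanalytic, not an analytic subvariety, and there are boundary issues where $Z^{\mathrm{an}}$ meets $\{z_i=0,\infty\}$ for $i\in I$ (the closure of $T_{z_i}$ passes through $0$ and $\infty$). Second, and relatedly, the asserted property of $\B_{\ee}$ --- that its projection to $(S^1)^m$ eventually avoids any given proper analytic subvariety --- is itself a nontrivial \L{}ojasiewicz-type statement: one must check that a proper (sub)analytic germ through $\underline{0}$ cannot contain points with $\e_{i+1}<e^{-1/\e_i}$ arbitrarily close to the origin, which is exactly what the super-exponential pinching is designed for, and one must then enlarge the subanalytic bad set to something to which this applies. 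Neither point is fatal --- both are handled in \cite{Li} --- but as written your proof reduces the lemma to precisely the two assertions that constitute \cite[Thm.~4.2]{Li}, rather than establishing them. So the proposal is a faithful outline of the intended argument, with the colimit bookkeeping done and the analytic crux acknowledged but deferred.
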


\noindent For any $\ue\in \B_{\ee}$, replacing $T_{z_i}$ by $T_{z_i}^{\e_i}$ everywhere in the KLM formula yields a morphism of complexes

$$\widetilde{AJ}^{p,-\b}_{\ee,\ue}:\;N^p_{\ee}(X,-\b)\longrightarrow C_{\mathscr{D}}^{2p+\b}(X,\ZZ(p)).$$

\begin{lem}[\cite{Li}, Thm. 6.1] \label{lem2}
Given $\ue,\ue'\in\B_{\ee}$, $\widetilde{AJ}^p_{\ee,\ue}$ and $\widetilde{AJ}^p_{\ee,\ue'}$ are \textup{(}$\ZZ$-\textup{)}homotopic.
\end{lem}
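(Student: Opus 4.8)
The plan is to construct an explicit chain homotopy between the two morphisms of complexes $\widetilde{AJ}^p_{\ee,\ue}$ and $\widetilde{AJ}^p_{\ee,\ue'}$. The two maps differ only because the branch cuts sit at different angles: the currents $T_{z_i}^{\e_i}$ and $T_{z_i}^{\e_i'}$ are the negative real axis rotated by $\e_i$ versus $\e_i'$. Since these are homologous relative to their common boundary (the origin/infinity locus), the natural idea is to interpolate. First I would introduce, for each coordinate $z_i$, the ``wedge'' region $W_{z_i}$ swept out as the branch cut rotates from angle $\e_i$ to $\e_i'$, i.e. the $2$-current bounded by $T_{z_i}^{\e_i}$, $T_{z_i}^{\e_i'}$, and an arc at $0$ and $\infty$. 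This wedge satisfies $\partial W_{z_i} = T_{z_i}^{\e_i'} - T_{z_i}^{\e_i}$ (up to the usual sign/orientation bookkeeping), and the discrepancy between $\log$ with the two branch choices is precisely $(2\pi\ay)\delta_{W_{z_i}}$ in the relevant range.

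Given the wedges, the plan is to write down the homotopy operator $h$ componentwise on the three pieces of $C^{2p+\b}_{\mathscr{D}}$. For the current component, $h$ should replace one of the $T$-deltas in $R_n$ by a wedge $\delta_{W_{z_i}}$, producing a current of one higher degree; the singular and holomorphic components receive the analogous (typically vanishing or purely topological) contributions. Concretely I would define $h(Z)$ by pushing forward under $(\pi_2)_*(\pi_1)^*$ a universal expression built from the $T^{\e_i}$'s, the $T^{\e_i'}$'s, and exactly one $W_{z_i}$, summed over $i$ with appropriate signs, so that verifying $D h + h D = \widetilde{AJ}^p_{\ee,\ue'} - \widetilde{AJ}^p_{\ee,\ue}$ reduces to Stokes' theorem applied to each wedge together with the telescoping of the $R_n$ recursion. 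The \emph{integrality} of the homotopy is the whole point: because the branch-difference of $\log$ jumps by exactly $2\pi\ay$ across each cut, the $(2\pi\ay)$-normalizations in the KLM formula are arranged so that $h$ lands in the integral lattice $\ZZ(p)$ rather than merely $\QQ(p)$, matching the discrete first component $\ZZ(2)\hookrightarrow\CC$ seen in the $(p,n)=(2,3)$ diagram.

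The main obstacle I expect is \textbf{properness}, not formal algebra. The homotopy involves the $2$-dimensional wedges $W_{z_i}$ rather than the $1$-dimensional cuts $T_{z_i}^{\e_i}$, so for $(\pi_2)_*(\pi_1)^*$ to make sense I need $Z^{\text{an}}$ to meet the combinations involving these wedges — and their iterated intersections — properly. This is exactly why one must work inside $N^p_{\ee}(X,\b)$ and exploit the rapid-decay constraints defining $\B_{\ee}$ (each $\e_{i+1}<e^{-1/\e_i}$): these guarantee that the swept regions are thin enough that proper intersection for the discrete cuts upgrades to proper intersection for the wedges, for $\ue,\ue'$ in the \emph{same} $\B_{\ee}$. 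The second delicate point is the normalization hypothesis: the homotopy formula will only close up (the boundary terms at $z_i=0,\infty$ must cancel) because $Z\cdot\{z_i=0\}=Z\cdot\{z_i=\infty\}=0$ for $i<n$, which is precisely the defining condition of $N^p$ and the reason strategy \textbf{(4)} succeeds where \textbf{(2)}–\textbf{(3)} on $Z^p$ fail. Once properness and the normalization cancellations are secured, the homotopy identity itself is a bookkeeping exercise in Stokes' theorem and the recursive structure of $R_n$.
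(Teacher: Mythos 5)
Your proposal is essentially the paper's argument: the interpolating wedge currents you describe are exactly the content of the universal $(-1)$-cochain $\mathcal{S}^{\ue,\ue'}_{\square}$ that the paper constructs in its truncated double complex on the $(\PP^1)^a$'s with $\mathbb{D}\mathcal{S}^{\ue,\ue'}_{\square}=\mathcal{R}^{\ue}-\mathcal{R}^{\ue'}$, and you correctly isolate the two genuine issues --- properness of $Z^{\mathrm{an}}$ against the wedges' wavefront sets (secured by the nesting conditions defining $\B_{\ee}$) and the need to work with normalized precycles. The only cosmetic difference is that the paper packages the face compatibilities via a Gysin differential in that double complex rather than writing the homotopy operator coordinate by coordinate as you do.
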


\begin{proof}[Sketch] 
Truncating at some $N$, we may view
$$\mathcal{R}^{\ue}_{\square} =\left\{ \mathcal{R}_n^{\hat{\ue}}:=\left( (2\pi\ay)^n T_n^{\hat{\ue}},\Omega_n,R_n^{\hat{\ue}}\right)\right\}_{n,\hat{\ue}}$$
($0\leq n\leq N$; $\{\hat{\e}_1,\ldots,\hat{\e}_n\}\subset \{\e_1,\ldots,\e_N\}$ subsequence) as a $0$-cocycle in the double complex
$$\left( E^{a,b}=C_{\mathscr{D}}^{2a+b}\left( (\PP^1)^a\right)^{\oplus {N \choose a} 2^{N-a}},\;\delta_{\text{Gysin}},\;D_{\mathscr{D}}\right) .$$
Construct a $(-1)$-cochain $\mathcal{S}^{\ue,\ue'}_{\square}$ with $\mathbb{D}\mathcal{S}^{\ue,\ue'}_{\square} = \mathcal{R}^{\ue} - \mathcal{R}^{\ue'}$, and with respect to whose wavefront set the precycles in $N^p_{\ee}(X,\b)$ remain proper.
\end{proof}

\noindent We therefore have well-defined, compatible maps
$${AJ}^{p,n}_{\ee}:\;H_n(N^p_{\ee}(X,\b))\to H_{\mathscr{D}}^{2p-n}(X,\ZZ(p))$$
for each $\ee>0$, essentially given by $\lim_{\ue\to \underline{0}}\widetilde{AJ}^p_{\ue}$ (where the limit is taken so that $\ee>\e_1\gg\e_2\gg\e_3\gg \cdots >0$), and recovering $\widetilde{AJ}^p_{\text{KLM}}$ on $N^p_{\RR}(X,\b) := Z^p_{\RR}(X,\bullet)\cap N^p(X,\bullet)$.  

More precisely:
\begin{thm}[\cite{Li}, $\S$7]
The $\{ \widetilde{AJ}^{p,-\b}_{\ee,\ue}\}$ induce a homomorphism
$${AJ}^{p,n}_{\ZZ}:\;{CH}^p(X,n)\cong \lim_{\ee\to 0}H_n(N^p_{\ee}(X,\b))\to H_{\mathscr{D}}^{2p-n}(X,\ZZ(p))$$
factoring ${AJ}^{p,n}_{\QQ}$.
\end{thm}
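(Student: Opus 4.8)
The plan is to glue the family $\{\widetilde{AJ}^{p,-\b}_{\ee,\ue}\}$ along the directed system of subcomplexes and then to identify its rationalization with ${AJ}^{p,n}_{\QQ}$; granting Lemmas \ref{lem1} and \ref{lem2}, this is essentially a formal assembly. First I would fix $\ee>0$. For each $\ue\in\B_{\ee}$ the morphism of complexes $\widetilde{AJ}^{p,-\b}_{\ee,\ue}$ induces a map on homology $H_n(N^p_{\ee}(X,\b))\to H^{2p-n}_{\mathscr{D}}(X,\ZZ(p))$; since $\ZZ$-homotopic morphisms of complexes induce the same map on homology, Lemma \ref{lem2} shows this map is independent of the choice of $\ue$. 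Call it ${AJ}^{p,n}_{\ee}$, and note it is a group homomorphism.

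Next I would verify compatibility across the directed system. For $\ee'<\ee$ the properness condition defining $N^p_{\ee}$ is imposed over the larger index set $\B_{\ee}\supseteq\B_{\ee'}$, hence is stronger, giving $N^p_{\ee}(X,\b)\subseteq N^p_{\ee'}(X,\b)$ with inclusion $\iota_{\ee,\ee'}$. Choosing a common $\ue\in\B_{\ee'}\subseteq\B_{\ee}$, the current-valued formula $\widetilde{AJ}^p_{\ue}$ depends only on $\ue$ (through the perturbed loci $T^{\e_i}_{z_i}$), so the morphism on the larger complex $N^p_{\ee'}$ restricts to the one on $N^p_{\ee}$; passing to homology yields ${AJ}^{p,n}_{\ee}={AJ}^{p,n}_{\ee'}\circ(\iota_{\ee,\ee'})_*$. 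The universal property of the direct limit then produces a unique homomorphism out of $\varinjlim_{\ee\to 0}H_n(N^p_{\ee}(X,\b))$, which by Lemma \ref{lem1} is ${CH}^p(X,n)$; this is ${AJ}^{p,n}_{\ZZ}$, and it is additive because each ${AJ}^{p,n}_{\ee}$ is.

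For the factoring, I would restrict to the $\RR$-proper normalized cycles $N^p_{\RR}(X,\b)=Z^p_{\RR}(X,\b)\cap N^p(X,\b)$. Any such $Z$ lies in $N^p_{\ee}(X,\b)$ once $\ee$ is small, and because its intersections with the unperturbed loci $T_n$ are already proper, the limit $\ue\to\underline{0}$ of $\widetilde{AJ}^{p,n}_{\ee,\ue}(Z)$ equals the KLM current $\widetilde{AJ}^{p,n}_{\text{KLM}}(Z)$; since Lemma \ref{lem2} makes the homology class $\ue$-independent, ${AJ}^{p,n}_{\ee}[Z]=[\widetilde{AJ}^{p,n}_{\text{KLM}}(Z)]$. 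By the Kerr--Lewis rational quasi-isomorphism $Z^p_{\RR}(X,\b)\hookrightarrow Z^p(X,\b)$ the $\RR$-proper classes rationally generate ${CH}^p(X,n)$, and there the KLM current computes ${AJ}^{p,n}_{\QQ}$; hence composing ${AJ}^{p,n}_{\ZZ}$ with the coefficient map $\ZZ(p)\to\QQ(p)$ recovers ${AJ}^{p,n}_{\QQ}$ on a $\QQ$-generating set, and thus everywhere by linearity.

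The main obstacle is the compatibility step, which rests delicately on the direction of the nesting: one needs $\B_{\ee'}\subseteq\B_{\ee}$ and $N^p_{\ee}\subseteq N^p_{\ee'}$ for $\ee'<\ee$, so that a single $\ue$ computes both ${AJ}^{p,n}_{\ee}$ and ${AJ}^{p,n}_{\ee'}$ at once. Underlying even the well-definedness of each ${AJ}^{p,n}_{\ee}$, moreover, is that the homotopy invariance invoked in the first step is not purely formal: the homotopy of Lemma \ref{lem2} is built from a cochain $\mathcal{S}^{\ue,\ue'}_{\square}$ whose wavefront set must stay transverse to the precycles, and it is precisely this transversality --- unavailable in the older rational moving-lemma approach --- that lets the argument run with $\ZZ$- rather than $\QQ$-coefficients and thereby makes ${AJ}^{p,n}_{\ZZ}$ integral.
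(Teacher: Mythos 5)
Your proposal is correct and follows essentially the same route the paper takes: the paper itself defers the detailed proof to [Li, \S 7], and the surrounding discussion outlines exactly your assembly --- well-definedness of each ${AJ}^{p,n}_{\ee}$ via the homotopy of Lemma \ref{lem2}, compatibility along the nested system $N^p_{\ee}\subseteq N^p_{\ee'}$ ($\ee'<\ee$), passage to the direct limit via Lemma \ref{lem1}, and recovery of the KLM formula (hence of ${AJ}^{p,n}_{\QQ}$, via the Kerr--Lewis rational quasi-isomorphism) on $\RR$-proper normalized representatives. You also correctly identify the two genuinely nontrivial inputs (the direction of the nesting and the transversality of the homotopy's wavefront set), so no gaps to report.
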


\noindent This theorem has the
\begin{cor}
If a class $\xi\in  {CH}^p(X,n)$ is represented by $$Z\in \ker(\partial)\subset N^p_{\RR}(X,n)\, \left( \subset \bigcap_{\varepsilon>0} N_{\varepsilon}^p(X,n) \right) ,$$
then
$${AJ}^{p,n}_{\ZZ}(\xi )=\lim_{\ue\to \underline{0}}\widetilde{AJ}^{p,n}_{\ue}(Z)=\widetilde{AJ}_{\text{KLM}}^{p,n}(Z).$$
\end{cor}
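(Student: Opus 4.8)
The plan is to establish the two displayed equalities separately, the first being essentially formal and the second carrying all the analytic content. The overall strategy for the second is continuity of the KLM currents under rotation of the branch cuts, exploiting that $\RR$-properness is an \emph{open} condition on the perturbation parameter.

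For the first equality ${AJ}^{p,n}_{\ZZ}(\xi)=\lim_{\ue\to\underline{0}}\widetilde{AJ}^{p,n}_{\ue}(Z)$, I would simply unwind the definition of ${AJ}^{p,n}_{\ZZ}$ furnished by the Theorem. Since $Z$ is normalized and $\RR$-proper, $Z\in\bigcap_{\ee>0}N^p_{\ee}(X,n)$, and $\partial Z=0$ gives a class $[Z]_\ee\in H_n(N^p_\ee(X,\b))$ for every $\ee$. The transition maps of the inverse system $\lim_\ee H_n(N^p_\ee(X,\b))$ are induced by the inclusions $N^p_{\ee'}\hookrightarrow N^p_\ee$, under which the single cycle $Z$ maps to itself; hence $\left([Z]_\ee\right)_\ee$ is a compatible system, and under the isomorphism $CH^p(X,n)\cong\lim_\ee H_n(N^p_\ee(X,\b))$ of Lemma \ref{lem1} it is precisely the image of $\xi=[Z]$. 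As $\partial Z=0$ and each $\widetilde{AJ}^p_{\ee,\ue}$ is a morphism of complexes, $\widetilde{AJ}^{p,n}_{\ue}(Z)$ is a $D$-cocycle whose class is ${AJ}^{p,n}_\ee([Z]_\ee)$ (independent of $\ue\in\B_\ee$ by Lemma \ref{lem2}); taking the value computing ${AJ}^{p,n}_\ee$, namely $\lim_{\ue\to\underline{0}}\widetilde{AJ}^{p,n}_\ue$, yields the first equality.

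For the second equality $\lim_{\ue\to\underline{0}}\widetilde{AJ}^{p,n}_\ue(Z)=\widetilde{AJ}^{p,n}_{\text{KLM}}(Z)$, write the deformed value as $(2\pi\ay)^{p-n}(\pi_2)_*(\pi_1)^*\bigl((2\pi\ay)^n T_n^{\ue},\Omega_n,R_n^{\ue}\bigr)$, where $T_n^{\ue}=\prod_i T^{\e_i}_{z_i}$ and $R_n^{\ue}$ is obtained from $R_n$ by replacing each occurrence of $T_{z_i}$ (including the delta currents $\delta_{T_{z_i}}$) by its $\e_i$-rotate. The middle component $\Omega_n$ is independent of $\ue$. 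Because $Z^{\text{an}}$ meets the undeformed loci $\cap_{i\in I}T_{z_i}$ properly, it meets the rotated loci $\cap_{i\in I}T^{\e_i}_{z_i}$ properly for all $\ue$ in a neighborhood of $\underline{0}$; on this neighborhood $\ue\mapsto T_n^{\ue}$ and $\ue\mapsto R_n^{\ue}$ are continuous families in the current topology, with $T_n^{\ue}\to T_n$ and $R_n^{\ue}\to R_n$ as $\ue\to\underline{0}$. Since $(\pi_2)_*(\pi_1)^*$ is continuous on currents meeting $Z^{\text{an}}$ properly, the deformed triple converges to $\bigl((2\pi\ay)^n T_n,\Omega_n,R_n\bigr)$, giving $\widetilde{AJ}^{p,n}_{\text{KLM}}(Z)$; and convergence of the cochains forces equality of the associated classes in $H^{2p-n}_{\mathscr{D}}(X,\ZZ(p))$.

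The main obstacle, I expect, is the rigorous continuity of the $R$-component $(\pi_2)_*(\pi_1)^*R_n^{\ue}$ (for $X=\mathrm{Spec}(k)$, the number $\tfrac{1}{2\pi\ay}\int_Z R_n^{\ue}$) as the branch cuts rotate back to $\RR_{<0}$, since $R_n^{\ue}$ combines a logarithm carrying a rotating cut with delta currents supported on the moving loci $T^{\e_i}_{z_i}$. The point to nail down is a dominated-convergence estimate: the $\log$ terms have integrable singularities along the cut, converge pointwise off $T_{z_i}$, and are dominated uniformly in $\ue$, while $\RR$-properness forces $Z^{\text{an}}$ to meet the cut in codimension $\geq 1$, so the exceptional set is negligible; the delta-current terms converge because $Z^{\text{an}}\cap T^{\e_i}_{z_i}\to Z^{\text{an}}\cap T_{z_i}$ with multiplicities as the real parameters $\e_i\to 0$, again by openness of proper intersection. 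One must also check that no wavefront-set obstruction (in the sense of the sketch of Lemma \ref{lem2}) appears in the limit, i.e. that the pushforward stays defined throughout the degeneration $\ue\to\underline{0}$; this is exactly what the $\RR$-properness hypothesis guarantees, since it makes the endpoint already admissible and so avoids the pathological cycles of \cite[$\S$3]{Li} that motivated introducing distinct $\{\e_i\}$ in the first place.
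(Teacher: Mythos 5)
Your proposal is correct and matches the paper's (implicit) reasoning: the corollary is stated there without a separate proof, being immediate from the Theorem together with the preceding observation that the ${AJ}^{p,n}_{\ee}$ are given by $\lim_{\ue\to\underline{0}}\widetilde{AJ}^{p,n}_{\ue}$ and recover $\widetilde{AJ}^{p,n}_{\text{KLM}}$ on $N^p_{\RR}(X,\bullet)$. Your two steps --- the formal unwinding of the inverse limit for the first equality, and the dominated-convergence/continuity-of-intersection argument for the second --- are precisely the ingredients the paper delegates to \cite{Li}, and your identification of the delta-current terms and the wavefront/properness issue as the only delicate points is accurate.
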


\noindent So the KLM formula holds verbatim on normalized, $\RR$-proper representatives, validating the deductions at the end of Example \ref{ex2}. It is this statement that we (primarily) use in the applications that follow.

\section{Torsion generators}

Let $\mu_{\infty}=\bigcup_{m\in \mathbb{N}} \mu_m \subset \CC^*$ denote the roots of unity, and $w_r(k):=\left|(\mu_{\infty}^{\otimes r})^{\mathrm{Gal}(\bar{\QQ}/k)}\right|$ for any number field $k\subset \CC$. By the universal coefficient sequence for motivic cohomology
$$H^0_{\mathcal{M}}(k,\ZZ(r))\to H^0_{\mathcal{M}}(k,\ZZ/m\ZZ(r))\to H^1_{\mathcal{M}}(k,\ZZ(r))\overset{\cdot m}{\to}H^1_{\mathcal{M}}(k,\ZZ(r))$$
and vanishing of $H^0_{\mathcal{M}}(k,\ZZ(r))$,\footnote{See \cite[Ex. VI.4.6]{We}: since $H^{-1}_{\mathcal{M}}(k,\ZZ/m\ZZ(r))\cong H^{-1}_{\text{\'et}}(k,\mu_m^{\otimes r})=\{0\}$, $\cdot m$ is injective on $H^0_{\mathcal{M}}(k,\ZZ(r))$ (universal coefficient sequence), which is thus torsion-free; it has rank $0$ since $H^0_{\mathcal{M}}(k,\QQ(r))\cong \mathrm{Gr}^r_{\gamma}K_{2r}(k)\otimes \QQ=\{0\}$ by Borel's theorem \cite{Bo1}.} we have
$$CH^r(k,2r-1)[m]\cong H^1_{\mathcal{M}}(k,\ZZ(r))[m]\cong H^0_{\mathcal{M}}(k,\ZZ/m\ZZ(r)).$$
Since the norm residue map
$$H^0_{\mathcal{M}}(k,\ZZ/m\ZZ(r))\to H^0_{\text{\'et}}(k,\mu_m^{\otimes r})\cong (\mu_m^{\otimes r})^{\mathrm{Gal}(\bar{\QQ}/k)}$$
is an isomorphism by a celebrated theorem of Rost-Voevodsky (cf. \cite{HW}), we conclude that $CH^r(k,2r-1)[m]\cong \ZZ/(m,w_r(k))\ZZ$ hence
$$CH^r(k,2r-1)_{\text{tors}}\cong \ZZ/w_r(k)\ZZ.$$

\begin{example} \label{ex5.1}
If $k=\QQ$, one has\footnote{Bernoulli numbers: $|B_{2n}|=\frac{1}{6},\frac{1}{30},\frac{1}{42},\ldots$ for $n=1,2,3,\ldots$.} $w_{2n}(k)=$ denominator of $\frac{|B_{2n}|}{4n}$ (written in lowest terms) and $w_{2n+1}=2$ for $n\geq 1$; so
$$CH^2(\QQ,3)\cong \ZZ/24\ZZ,\;\;CH^3(\QQ,5)_{\text{tors}}\cong \ZZ/2\ZZ,\;\;CH^4(\QQ,7)\cong \ZZ/240\ZZ.$$
For real quadratic fields $k=\QQ(\sqrt{d})$, the situation is more complicated (cf. \cite[$\S$VI.2]{We}); one computes for instance
\begin{center}
\begin{tabular}{m{5em}|m{1cm}|m{1cm}|m{1cm}|m{1cm}}
$d$ & $2$ & $3$ & $5$ & $7$ \\
\hline
$w_2(\QQ(\sqrt{d}))$ & $48$ & $24$ & $120$ & $24$ \\
\hline
$w_4(\QQ(\sqrt{d}))$ & $480$ & $240$ & $240$ & $240$
\end{tabular}
\end{center}
so that only $CH^2(\QQ(\sqrt{2}),3)$ and $CH^2(\QQ(\sqrt{5}),3)$ [resp. $CH^4(\QQ(\sqrt{2}),7)$] are different from the $k=\QQ$ case.  Finally, for cyclotomic $k=\QQ(\zeta_p)$ ($p\geq 5$ prime) one can show that $w_3(\QQ(\zeta_p))=2p$, while $w_3 (\QQ(\zeta_3))=18$.
\end{example}

For computing torsion orders of images under
\begin{flalign*}
AJ^{r,2r-1}_{\ZZ}:\,H_{2r-1}(N^r_{\RR}(k,&\bullet)) \to \CC/(2\pi\ay)^r\ZZ\\
&Z\longmapsto \frac{1}{(2\pi\ay)^{r-1}}\int_Z R_{2r-1} =: \mathscr{R}(Z)
\end{flalign*}
we use the following basic calculation:

\begin{prop} \label{prop5}
Suppose that for a given $r\in \mathbb{N}$ there exists a collection of closed precycles $Z^r_{\ell,a}\in N^r_{\RR}(\QQ(\zeta_{\ell}),2r-1)$ with \begin{equation}\label{eq5.0}\mathscr{R}(Z^r_{\ell,a})=(r-1)!\ell^{r-1}\mathrm{Li}_r(\zeta_{\ell}^a).\end{equation} Then for $Z:=\sum_{a=0}^{\ell-1} f(a) Z^r_{\ell,a}$ with $f(-a)=(-1)^r f(a)$, $AJ(Z)$ is torsion of order given by the denominator of \begin{equation}\label{eq5.1} \tau(Z):= \left| (2\pi\ay)^{-r} \mathscr{R}(Z)\right| = \pm\frac{\ell^{r-1}}{2r} \sum_{a=0}^{\ell-1} f(a) B_r(\tfrac{a}{\ell}).\end{equation}
\end{prop}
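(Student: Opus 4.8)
The plan is to reduce the statement to a single classical Fourier identity for Bernoulli polynomials, once the integral regulator machinery has been used to guarantee that torsion orders can be read off \emph{integrally}. Since the $Z^r_{\ell,a}$ are closed precycles in the abelian group $N^r_{\RR}(\QQ(\zeta_\ell),2r-1)$, so is the integer combination $Z$, and the Corollary applies to it: $AJ^{r,2r-1}_{\ZZ}(Z)$ is computed verbatim by the KLM integral $\mathscr{R}(Z)$ in $\CC/(2\pi\ay)^r\ZZ$. As $\mathscr{R}$ is additive, hypothesis \eqref{eq5.0} gives
$$\mathscr{R}(Z)=\sum_{a=0}^{\ell-1}f(a)\,\mathscr{R}(Z^r_{\ell,a})=(r-1)!\,\ell^{r-1}\sum_{a=0}^{\ell-1}f(a)\,\mathrm{Li}_r(\zeta_\ell^a).$$
Consequently $AJ(Z)$ is torsion precisely when $\tau(Z)=|(2\pi\ay)^{-r}\mathscr{R}(Z)|$ is rational, its order then being the denominator of $\tau(Z)$ in lowest terms; here it is essential that the Corollary delivers the honest integral class, not merely a rational multiple, so that the denominator is the true order in $CH^r(\QQ(\zeta_\ell),2r-1)$.

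Next I would exploit the parity hypothesis $f(-a)=(-1)^r f(a)$ (with $f$ extended $\ell$-periodically). Re-indexing $a\mapsto \ell-a$ sends $\zeta_\ell^a\mapsto\zeta_\ell^{-a}$, so symmetrizing the polylogarithm sum yields
$$\sum_{a=0}^{\ell-1}f(a)\,\mathrm{Li}_r(\zeta_\ell^a)=\tfrac12\sum_{a=0}^{\ell-1}f(a)\Bigl(\mathrm{Li}_r(\zeta_\ell^a)+(-1)^r\mathrm{Li}_r(\zeta_\ell^{-a})\Bigr).$$
The point of this rewriting is that the parenthesized combination is exactly the one appearing in the Fourier expansion of Bernoulli polynomials: with $\zeta_\ell=e^{2\pi\ay/\ell}$ and $0\le a/\ell<1$, the Hurwitz formula
$$B_r(x)=-\frac{r!}{(2\pi\ay)^r}\sum_{k\ne 0}\frac{e^{2\pi\ay kx}}{k^r}=-\frac{r!}{(2\pi\ay)^r}\Bigl(\mathrm{Li}_r(e^{2\pi\ay x})+(-1)^r\mathrm{Li}_r(e^{-2\pi\ay x})\Bigr)$$
(valid for $r\ge 2$ on the closed interval) converts the sum into $-\tfrac{(2\pi\ay)^r}{2\cdot r!}\sum_a f(a)B_r(\tfrac{a}{\ell})$.

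Substituting back and using $(r-1)!/r!=1/r$ collapses everything to
$$(2\pi\ay)^{-r}\mathscr{R}(Z)=-\frac{\ell^{r-1}}{2r}\sum_{a=0}^{\ell-1}f(a)\,B_r(\tfrac{a}{\ell}),$$
and taking absolute values gives precisely \eqref{eq5.1}; since the $B_r(a/\ell)$ are rational, $\tau(Z)\in\QQ$ and $AJ(Z)$ has the asserted torsion order. I do not expect a serious conceptual obstacle here---the real content is the already-cited Corollary that makes the computation integral. The only delicate points are bookkeeping: tracking the sign and the $(-1)^r$ through the symmetrization, and the $a=0$ term, where the parity hypothesis forces $f(0)=0$ for odd $r$, matching $B_r(0)=0$ and keeping the Hurwitz formula applicable throughout the relevant range $r\ge 2$.
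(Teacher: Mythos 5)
Your proof is correct and takes essentially the same route as the paper's: symmetrize the polylogarithm sum using the parity hypothesis on $f$ and convert it to Bernoulli polynomials via the Fourier (Hurwitz) expansion $\sum_{k\neq 0}e^{2\pi\ay kx}/k^r=-\tfrac{(2\pi\ay)^r}{r!}B_r(x)$, which is precisely the content of the identity the paper imports from \cite{css}. The overall sign you get differs from the one displayed in the paper's proof, but both are absorbed by the $\pm$ and the absolute value in \eqref{eq5.1}, so nothing is at stake.
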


\begin{proof}
By \cite[Thm. 3.9]{css},
\begin{flalign*}
\sum_{a=0}^{\ell-1} f(a) \mathrm{Li}_r(\zeta_{\ell}^a) & = \frac{1}{2} \sum_{a=0}^{\ell-1} f(a)\sum_{k\in \ZZ\setminus \{0\} } \frac{\zeta_{\ell}^{ka}}{k^r} \\
&=: \frac{(-1)^r}{2}\sum_{k\in \ZZ\setminus \{0\} } \frac{\hat{f}(k)}{k^r} =: \frac{(-1)^r}{2}\tilde{L}(\hat{f},r)\\
&=\frac{(2\pi\ay)^r}{2\cdot r!}\sum_{a=0}^{\ell-1} f(a) B_r(\tfrac{a}{\ell}),
\end{flalign*}
where $B_r(\cdot)$ are the Bernoulli polynomials.\footnote{$B_2(x)=x^2 -x+\frac{1}{6}$, $B_3(x)=x^3 - \frac{3}{2}x^2 +\frac{1}{2}x$, $B_4(x)=x^4 - 2x^3 + x^2 -\frac{1}{30}$, etc.}
\end{proof}

In practice, the $\{Z^r_{\ell,a}\}$ will be obtained from a single cycle $Z_{\ell}^r = Z^r_{\ell,1}$ by Galois conjugation. For $r=2$, we already have this from Examples \ref{ex1} and \ref{ex2}.

Now $CH^r(k,2r-1)=CH^r(k,2r-1)_{\text{tors}}$ $\iff$ $r$ is even and $k$ is totally real. In particular, assuming Prop. \ref{prop5}'s hypothesis, we obtain generators of $CH^{2n}(k,4n-1)$ as follows:

\begin{itemize}
\item $k=\QQ$, any $n$: $\;\;\tau(Z^{2n}_1)=\frac{|B_{2n}|}{4n} \, (=\tfrac{1}{24},\tfrac{1}{240},\cdots)$;
\item $k=\QQ(\sqrt{2})$, $n=1,2$: $\;\;\tau(Z^{2n}_{8,1}+Z^{2n}_{8,7})=\tfrac{11}{48},\tfrac{1313}{480}$;
\item $k=\QQ(\sqrt{5})$, $n=1$: $\;\;\tau(Z_1^2 + Z_{5,1}^2 + Z_{5,4}^2 )=\tfrac{7}{120}$.
\end{itemize}
For $CH^3(k,5)_{\text{tors}}$ one computes for example
\begin{itemize}
\item $k=\QQ(\zeta_3)$: $\;\;\tau(Z^3_{3,1}-Z^3_{3,2}) = \tfrac{1}{9}\;$ and
\item $k=\QQ(\zeta_5)$: $\;\;\tau(Z_{5,1}^3-Z_{5,4}^3)=\tfrac{2}{5}$,
\end{itemize}
which miss only the 2-torsion element from $CH^3(\QQ,5)\hookrightarrow CH^3(k,5)$. (So far we have no $N^3_{\RR}(\QQ,5)$ representative for this element.)

It remains to construct the cycles of the Proposition for $r=3,4$. From \cite[$\S$4.2]{KY}, for $r=3$ we have\footnote{The components are parametrized by $(t_1,t_2)$ and $(t,u)$ respectively.}
\begin{multline*}
Z_{\ell}^3 := -2 \left( \frac{t_1}{t_1 -1},\frac{t_2}{t_2 -1},1-\zeta_{\ell}t_1 t_2, t_1^{\ell},t_2 ^{\ell}\right) \\ -\left( \frac{t}{t-1},\frac{1}{1-\zeta_{\ell}t},\frac{(u-t^{\ell})(u-t^{-\ell})}{(u-1)^2},t^{\ell}u,\frac{u}{t^{\ell}}\right)
\end{multline*}
which is normalized since all ``boundaries'' occur in the third coordinate. We have $\mathscr{R}(Z_{\ell}^3)=2\mathrm{Li}_3(\zeta_{\ell})$ by [op. cit., Thm. 3.6], with only the first term contributing. (This gives in particular $\mathscr{R}(Z_1^3)=2\mathrm{Li}_3(1)=2\zeta(3)$.)

For $r=4$, the first construction in \cite[$\S$4.3]{KY} would be in $N^4_{\RR}(\QQ(\zeta_{\ell}),7)$, but there is an error in the computation of the boundary of the last component $\mathscr{W}_2$: in fact, it is degenerate,\footnote{i.e. belongs to $d^4(\QQ(\zeta_{\ell}),7)$, as can be seen by substituting $v=uw$.} and the cycle $\tilde{\mathscr{Z}}$ is therefore not closed. A correct application of the strategy in [op. cit., $\S$3.1] yields $Z_{\ell}^4 :=$
\begin{flalign*}
& 6\left( \frac{t_1}{t_1-1},\frac{t_2}{t_2-1},\frac{t_3}{t_3-1},1-\zeta_{\ell}t_1 t_2 t_3,t_1^{\ell},t_2^{\ell},t_3^{\ell} \right) \\
+& \left(\frac{t_1}{t_1-1},\frac{t_2}{t_2-1},\frac{1}{1-\zeta_{\ell}t_1 t_2}, \frac{(u-t_1^{\ell})(u-t_2^{\ell})(u-t_1^{-\ell}t_2^{-\ell})}{(u-1)^3},\frac{t_1^{\ell}}{u},\frac{t_2^{\ell}}{u},\frac{1}{ut_1^{\ell}t_2^{\ell}} \right) \\
+& \left(\frac{t_1}{t_1-1},\frac{t_2}{t_2-1},\frac{1}{1-\zeta_{\ell}t_1 t_2}, \frac{(u-t_1^{\ell})(u-t_2^{\ell})}{(u-t_1^{\ell}t_2^{\ell})(u-1)},\frac{t_1^{\ell}}{u},\frac{t_2^{\ell}}{u},\frac{u}{t_1^{\ell}t_2^{\ell}} \right) \\
+& \left(\frac{t_1}{t_1-1},\frac{t_2}{t_2-1},\frac{1}{1-\zeta_{\ell}t_1 t_2}, \frac{(u-t_1^{\ell})(u-t_1^{-\ell}t_2^{-\ell})}{(u-t_2^{-\ell})(u-1)},\frac{t_1^{\ell}}{u},t_2^{\ell}u,\frac{1}{ut_1^{\ell}t_2^{\ell}} \right) \\
+& \left(\frac{t_1}{t_1-1},\frac{t_2}{t_2-1},\frac{1}{1-\zeta_{\ell}t_1 t_2}, \frac{(u-t_2^{\ell})(u-t_1^{-\ell}t_2^{-\ell})}{(u-t_1^{-\ell})(u-1)},t_1^{\ell}u,\frac{t_2^{\ell}}{u},\frac{1}{t_1^{\ell}t_2^{\ell}u} \right) \\
+&\left(\frac{(v-u)(v-u^{-1})}{(v-1)^2}, \frac{t}{t-1},\frac{1}{1-\zeta t},\frac{(u-t^{\ell})(u-t^{-\ell})}{(u-1)^2},v,\frac{t^{\ell}}{u},\frac{1}{u t^{\ell}} \right)\\
+&\left( \frac{t}{t-1}, \frac{(v-u)(v-u^{-1})}{(v-1)^2},\frac{1}{1-\zeta t},\frac{(u-t^{\ell})(u-t^{-\ell})}{(u-1)^2},\frac{t^{\ell}}{u},v,\frac{1}{u t^{\ell}} \right)\\
+&\left( \frac{t}{t-1},\frac{1}{1-\zeta t},\frac{(v-u)(v-u^{-1})}{(v-1)^2},\frac{(u-t^{\ell})(u-t^{-\ell})}{(u-1)^2},\frac{t^{\ell}}{u},\frac{1}{u t^{\ell}},v \right) \;,
\end{flalign*}
which belongs to $\ker(\partial)\cap N^4_{\RR}(\QQ(\zeta_{\ell}),7)$. Only the first term contributes to $\mathscr{R}(Z^4_{\ell})=-6\ell^3 \int_{[0,1]^3} \log(1-\zeta_{\ell}t_1 t_2 t_3)\frac{dt_1}{t_1}\wedge \frac{dt_2}{t_2}\wedge \frac{dt_3}{t_3} = 6\ell^3 \mathrm{Li}_4(\zeta_{\ell})$, see [op. cit., $\S$3.2].

\begin{rem}
An example of a cycle for which the log-branch perturbations \emph{are} required for the integral regulator computation is $Z:=$ $$ Z_- - Z_+:=\left( \left(\frac{z-\ay}{z+\ay}\right)^{-2},\left(\frac{z-1}{z+1}\right)^{-2},z^{-2}\right) - \left( \left(\frac{z-\ay}{z+\ay}\right)^2,\left(\frac{z-1}{z+1}\right)^2,z^2\right)$$
(parametrized by $z\in \PP^1$) in $N^2(\QQ(\ay),3)$. Indeed, $T_{z^2}$ [resp. $T_{\left(\frac{z-1}{z+1}\right)^2}$, $T_{\left(\frac{z-\ay}{z+\ay}\right)^2}$] has support on $\ay\RR$ [resp. the unit circle $S^1$], so that the triple intersection (essentially $\ay\RR\cap S^1 \cap S^1$) is nonempty. Though this cycle is \emph{non}-torsion, we briefly describe the computation. After making the deformation, $T_{\left(\frac{z-\ay}{z+\ay}\right)^2}$ and $T_{\left(\frac{z-1}{z+1}\right)^2}$ intersect twice with opposite orientations, at points near $\ay$ and $-\ay$ with phase just greater than $\tfrac{\pi}{2}$ resp. $\tfrac{3\pi}{2}$. Since $\epsilon_3 \to 0$ much faster than $\epsilon_1$ and $\epsilon_2$, in the limit the $$2\pi\ay\int_{Z_+} \log^{\epsilon_3}(z^2)\,\delta_{T^{\epsilon_1}_{\left(\frac{z-\ay}{z+\ay}\right)^2}\cap T^{\epsilon_2}_{\left(\frac{z-1}{z+1}\right)^2}}$$ term of $\frac{1}{2\pi\ay}\int_{Z_+}R_3$ contributes $2\pi\ay(\pi\ay-\pi\ay)=0$. The remaining term yields 
\begin{equation}\label{eq5r} 2\int_{Z_+}\log^{\epsilon_2}\left(\left(\frac{1-z}{1+z}\right)^2\right)\frac{dz}{z}\,\delta_{T^{\epsilon_1}_{\left(\frac{z-\ay}{z+\ay}\right)^2}}\;, \end{equation} where $T^{\epsilon_1}_{\left(\frac{z-\ay}{z+\ay}\right)^2}$ consists of two paths from $-\ay$ to $\ay$, along which one checks that (in the limit) $\log^{\epsilon_2}\left(\left(\frac{1-z}{1+z}\right)^2\right) = 2\log^{\epsilon_2}(1-z)-2\log^{\epsilon_2}(1+z)$; and so \eqref{eq5r} becomes $8\int_{-\ay}^{\ay} \log(1-z)\frac{dz}{z}-8\int_{-\ay}^{\ay} \log(1+z)\frac{dz}{z}$.  Combining this with the portion from  $Z_-$, we obtain
$$\mathscr{R}(Z)=32\mathrm{Li}_2(\ay)-32\mathrm{Li}_2(-\ay)=64\ay L(\chi_4,2)\in\CC/\ZZ(2).$$
\end{rem}

\section{Local $\PP^2$ revisited}

For a reflexive polytope $\Delta \subset \RR^2$ with polar polytope $\Delta^{\circ}$, the mirror of $K_{\PP_{\Delta^{\circ}}}$ (``local $\PP_{\Delta^{\circ}}$'') can be identified with a family of $CH^2(\cdot,2)$-elements on a family of anticanonical (elliptic) curves in $\PP_{\Delta}$ \cite[$\S5$]{DK1}. As a second application of the integral regulator, we show how to apply it to compute the correct ``torsion term'' in the higher normal function associated to one of these families. That is, if $\mathcal{E}\overset{\pi}{\to}\PP^1_t$ is smooth away from $\Sigma = \{0\}\cup \Sigma^*$, with fibers $E_t = \pi^{-1}(t)$, and $\Xi\in CH^2(\mathcal{E}\setminus E_0,2)$ has fiberwise restrictions $\xi_t \in CH^2(E_t,2)$ ($t\notin \Sigma$), we shall compute 
\begin{flalign*}
\mathscr{R}_t := AJ^{2,2}(\xi_t)\in H_{\mathscr{D}}(E_t,\CC/\ZZ(2))&\cong H^1(E_t,\CC/\ZZ(2)) \\
&\cong \mathit{Hom}\left( H_1(E_t,\ZZ),\CC/(2\pi\ay)^2 \ZZ\right)
\end{flalign*}
in a neighborhood of $t=0$. Writing $\{\omega_t\}$ for a section of $\omega_{\mathcal{E}/\PP^1}$ vanishing at $\infty$, the constant term in (a branch of) the resulting truncated higher normal function $$\nu(t):=\tfrac{1}{2\pi\ay}\langle \omega_t,\mathscr{R}_t\rangle$$ will play a role in forthcoming work of the first author with C. Doran on quantum curves.

To begin in a somewhat more general scenario, let $\Delta \subset \RR^2$ be any convex polytope with integer vertices $\{ p_i = (a_i,b_i)\}_{i=1}^N$ and interior integer points $\{(v_j,w_j)\}_{j=1}^g$. Define a multiparameter family $$\rho:\mathcal{C}\to \CC^g\; , \;\; \;\;\; C_{\underline{\lambda}}:=\rho^{-1}(\underline{\lambda})$$ of (where smooth) genus $g$ curves by taking the Zariski closure of $\mathcal{C}^* :=$ $$\{ (x,y;\lambda_1,\ldots ,\lambda_g)\mid 0=\Phi_{\underline{\lambda}}(x,y):=\phi(x,y)-\Sigma_{j=1}^g \lambda_j x^{v_j} y^{w_j} \} \subset (\CC^*)^2 \times \CC^g$$ in $\PP_{\Delta}\times \CC^g$, where $\phi(x,y):=\sum_{(a,b)\in \partial\Delta \cap \ZZ^2} m_{a,b} x^a y^b$ is uniquely determined by requiring its edge polynomials to be powers of $(t+1)$.  (If the edges of $\Delta$ have no interior points, then $\phi(x,y)=\sum_{i=1}^N x^{a_i} y^{b_i}$.) The symbol $\{-x,-y\}$ represents a closed precycle $\Xi^*\in Z^2(\mathcal{C}^*,2)$ parametrized (in $\mathcal{C}^* \times \square^2$) by $(x,y,\ul,-x,-y)_{(x,y,\ul)\in \mathcal{C}^*}$.

\begin{lem}
The class of $\Xi^*$ in $CH^2(\mathcal{C}^*,2)$ is the restriction of a class $\Xi \in CH^2(\mathcal{C},2)$.
\end{lem}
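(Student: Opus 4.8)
The plan is to realize the obstruction to extending $\Xi^*$ across the boundary as a tame symbol, and to show that it vanishes because of the special form of $\phi$. Write $D:=\mathcal{C}\setminus\mathcal{C}^*$ for the boundary, i.e. the intersection of $\mathcal{C}$ with the toric boundary $(\PP_\Delta\setminus(\CC^*)^2)\times\CC^g$; its irreducible components $D_e$ are indexed by the edges $e$ of $\Delta$ and are finite over the base $\CC^g$. The localization sequence for higher Chow groups (Bloch, Levine) gives an exact sequence
$$CH^2(\mathcal{C},2)\longrightarrow CH^2(\mathcal{C}^*,2)\overset{\partial}{\longrightarrow}CH^1(D,1),$$
so it suffices to prove $\partial[\Xi^*]=0$. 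Since $x,y\in\mathcal{O}^*(\mathcal{C}^*)$ and $[\Xi^*]=\{-x,-y\}$ is a product of units, $\partial$ is computed one component at a time, via the tame symbol $\tau_{D_e}$ landing in $CH^1(D_e,1)\cong\mathcal{O}^*(D_e)$.

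First I would record the valuations along each boundary divisor. If $u=(u_1,u_2)$ denotes the primitive ray generator normal to $e$ in the fan of $\PP_\Delta$, then $v_{D_e}(x)=u_1$ and $v_{D_e}(y)=u_2$, since $\mathrm{div}(\chi^m)=\sum_\rho\langle m,u_\rho\rangle D_\rho$. Collecting the signs coming from $-x,-y$, the tame symbol is
$$\tau_{D_e}\{-x,-y\}=(-1)^{u_1u_2}\,\frac{(-x)^{u_2}}{(-y)^{u_1}}\Big|_{D_e}=(-1)^{u_1u_2+u_1+u_2}\,\chi^{(u_2,-u_1)}\big|_{D_e}.$$
Because $(u_2,-u_1)\perp u$ is again primitive, it is $\pm$ the primitive edge direction, so $\chi^{(u_2,-u_1)}$ restricts to $t^{\pm1}$, where $t$ is the coordinate on the one-dimensional torus of $D_e$ in terms of which the edge polynomial is written.

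Next I would evaluate this function at the points $\mathcal{C}\cap D_e$. The interior monomials $x^{v_j}y^{w_j}$ vanish identically on $D_e$, so $\Phi_{\ul}|_{D_e}=\phi_e$ equals the edge polynomial, which by construction is $(t+1)^{\ell_e}$ (independent of $\ul$). Hence $\mathcal{C}\cap D_e\subset\{t=-1\}$, so $t^{\pm1}=-1$ there. It remains to check the prefactor: since $u$ is primitive, $u_1,u_2$ are not both even, hence $(u_1+1)(u_2+1)$ is even and $(-1)^{u_1u_2+u_1+u_2}=-(-1)^{(u_1+1)(u_2+1)}=-1$; this cancels the $-1$ coming from $t^{\pm1}$, giving $\tau_{D_e}\{-x,-y\}\equiv1$ on all of $D_e$. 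Thus $\partial[\Xi^*]=0$, and by exactness $[\Xi^*]$ lifts to a class $\Xi\in CH^2(\mathcal{C},2)$.

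The main obstacle is twofold. First, the sign bookkeeping must be done with care: it is precisely the replacement of $\{x,y\}$ by $\{-x,-y\}$ — a Steinberg-type normalization — together with the primitivity of $u$ that forces the tame symbol to be $+1$ rather than merely $\pm1$. Second, when some $\ell_e>1$ the intersection $\mathcal{C}\cap D_e$ is nonreduced and $\mathcal{C}$ may be singular there, so one should either pass to the smooth locus (or a toric resolution) to legitimize the tame-symbol description of $\partial$ and then descend, or verify directly that this higher tangency is harmless. The computation above already shows $t=-1$ at every point of the support regardless of its multiplicity, which is exactly the input the vanishing requires.
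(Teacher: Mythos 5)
Your proof is correct and follows essentially the same route as the paper's: both reduce the claim to showing that the tame symbol of $\{-x,-y\}$ is trivial along each component of the toric boundary, and both derive this from the fact that the edge polynomials are powers of $(1+t)$, so the curve meets each boundary divisor only where the relevant primitive monomial equals $-1$, which cancels the sign contributed by the minus signs in the symbol. The only difference is organizational --- you compute the valuations and signs directly from the normal fan, where the paper first applies a unimodular coordinate change to put each edge in standard position --- and your closing remark correctly identifies, and correctly dismisses, the one point requiring care when an edge polynomial is a higher power (there the valuations all get multiplied by $\ell_e$, so the symbol becomes the $\ell_e$-th power of your expression, which is still $1$ precisely because your expression is exactly $1$ and not merely $\pm 1$).
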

\begin{proof}
We need only check that the Tame symbol of $\{-x,-y\}|_{C_{\underline{\lambda}}^*}\in K^M_2(\CC(C_{\underline{\lambda}}))$ is zero for general $\underline{\lambda}$. The symbol $\{-x,-y\}$ is invariant under unimodular change of toric coordinates,\footnote{that is, replacing $x,y$ by $x^a y^b, x^c y^d$ with $ad-bc=1$; the $a_i,b_i,v_i,w_i$ are changed accordingly.} so we may assume that (after shifting $\Delta$ by $(-a_m,-b_m)$ for some $m$) we have a picture
\begin{equation}\label{fig!}
\includegraphics[scale=0.5]{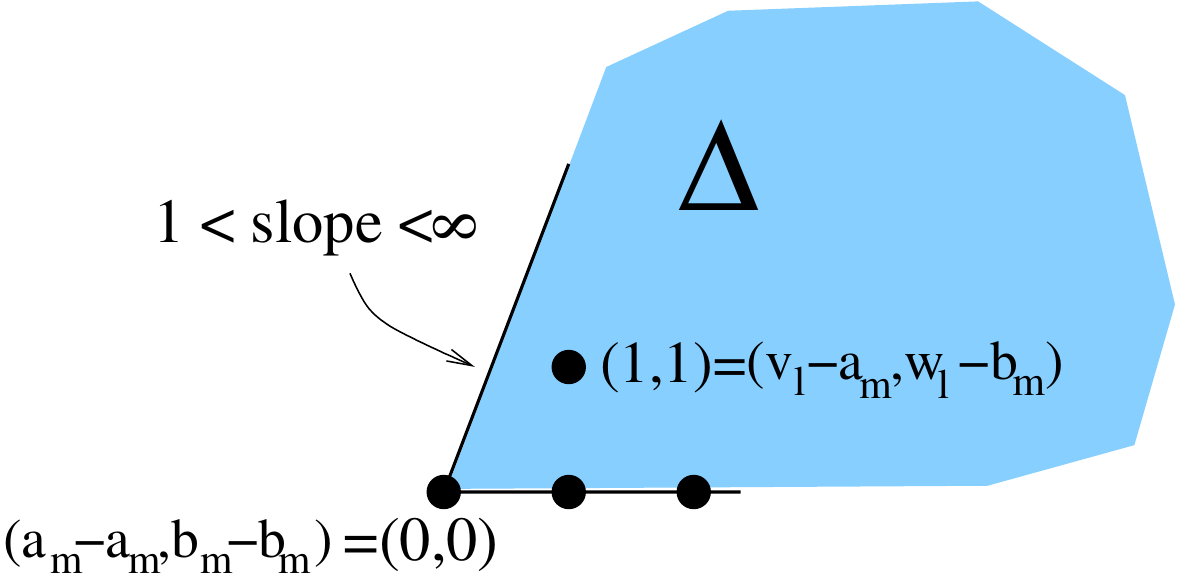}
\end{equation}
where the bottom edge corresponds to the toric divisor at whose intersection with $C_{\ul}$ we wish to compute $\mathrm{Tame}\left(\{-x,-y\}|_{C_{\ul}^*}\right)\in \CC^*$. Since the edge polynomial is $(1+x)^c$, this intersection occurs at $(-1,0)$, so the Tame symbol is $1$.
\end{proof}

Now set $\mathscr{R}_{\ul} := AJ^{2,2}(\Xi|_{C_{\ul}}) \in \mathit{Hom}(H_1(C_{\ul},\ZZ),\CC/\ZZ(2)).$  Picking any vertex $p_m$ of $\Delta$, we can (via unimodular coordinate change) put it in the position \eqref{fig!}.  In the new coordinates (still denoted $(x,y)$), $C_{\ul}$ is cut out by an equation of the form 
\begin{flalign*}
0=\tilde{\Phi}_{\ul}:&=x^{-a_m}y^{-b_m} \Phi_{\ul}(x,y)\\
&=(1+x)^{\kappa_m} + y\{ \Psi_m(x,y)-\Sigma_{j=1}^g \l_j x^{v_j-a_m} y^{w_j - b_m -1}\},
\end{flalign*} 
and acquires a node at $(0,0)$ as $\lambda_{\ell}\to \infty$.  (Note that $\ell$ is determined by $m$.)  The corresponding vanishing cycle $\alpha_m$ has image $|x|=|y|=\epsilon$ under $H_1(C_{\ul}),\ZZ)\overset{\text{Tube}}{\longrightarrow} H_2(\PP_{\Delta}\setminus C_{\ul})$ for large $|\l_{\ell}|$.

\begin{prop} \label{prop6.1}
For $\ay\l_{\ell}\in \mathfrak{H}$ and $|\l_{\ell}|\gg 0$, and $\l_{j\neq \ell}$ sufficiently small,\footnote{e.g. if $c:=|\Delta \cap \ZZ^2|$, then $|\l_{\ell}|> c\epsilon^{-2}$ and $|\l_{j\neq \ell}|<\frac{1}{c\epsilon^3}$ will do.} we have $$\mathscr{R}_{\ul}(\alpha_m) = 2\pi\ay ( -\log(\l_{\ell})+\textstyle{\sum}_{k\geq 1}\frac{1}{k} [\Psi_{\ul,\ell}^k]_{\underline{0}} ) \in \CC/\ZZ(2),$$ where $\Psi_{\ul,\ell}:=\frac{-1}{\l_{\ell}}\left( x^{-v_{\ell}}y^{-w_{\ell}}\Phi_{\ul} + \l_{\ell}\right)$ and $[\cdot]_{\underline{0}}$ takes the constant term in a Laurent polynomial.
\end{prop}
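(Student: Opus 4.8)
The plan is to evaluate the pairing directly from the KLM formula, which is licensed here by the Corollary of \S4: $\Xi|_{C_{\ul}}=\{-x,-y\}$ is a normalized precycle (its class is closed by the Tame-symbol computation of the preceding Lemma), and on a normalized $\RR$-proper representative the integral regulator $AJ^{2,2}_{\ZZ}$ is computed verbatim by $R_2$. For $(p,n)=(2,2)$ the relevant current is $R_2=\log(z_1)\tfrac{dz_2}{z_2}-2\pi\ay\,\log(z_2)\,\delta_{T_{z_1}}$, pulled back along $(x,y)\mapsto(z_1,z_2)=(-x,-y)$, and the $R$-component pairs with $\alpha_m\in H_1(C_{\ul},\ZZ)$ to give
\[
\mathscr{R}_{\ul}(\alpha_m)=\int_{\alpha_m}\log(-x)\,\tfrac{dy}{y}\;-\;2\pi\ay\!\!\sum_{p\in\alpha_m\cap T_{-x}}\!\!\log(-y(p))\quad\in\CC/\ZZ(2),
\]
with $T_{-x}=\{x\in\RR_{>0}\}$. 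If the naive representative fails to be $\RR$-proper I would first apply the $\ue$-perturbation of Lemma \ref{lem2} and pass to the limit; since the final integrand is a closed current this does not change the value.

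Next I would localize. For $\ay\l_{\ell}\in\mathfrak{H}$, $|\l_{\ell}|\gg0$ and $\l_{j\neq\ell}$ small, the vanishing cycle $\alpha_m$ is the loop $|x|=|y|=\e$ at the node, so both terms reduce to the neighborhood $\{|x|,|y|\le\e\}$; I parametrize $\alpha_m$ by $x=\e e^{\ay\theta}$ and solve the curve equation $\tilde{\Phi}_{\ul}=0$ for a branch $y=y(x)$ as a convergent series in the small quantities (this is exactly where the size estimates in the footnote are used). Using the defining relation $x^{-v_{\ell}}y^{-w_{\ell}}\Phi_{\ul}=-\l_{\ell}(1+\Psi_{\ul,\ell})$, I rewrite $\tfrac{dy}{y}=-\tfrac{dx}{x}+d\log(xy)$ in the shifted coordinates of \eqref{fig!}, where the node sits at the origin, the dominant term of the curve equation is $\l_{\ell}xy$, and the \emph{Laurent polynomial} $\Psi_{\ul,\ell}$ is small term-by-term even though $1+\Psi_{\ul,\ell}$ vanishes on $C_{\ul}$.

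The computation then splits into two pieces. At the unique $\delta_{T_{-x}}$-crossing $x=\e$, the factor $\log(-y)$ evaluates, via $y=(xy)/x\sim(1+\e)^{\kappa_m}/(\l_{\ell}\e)$, to $-\log\l_{\ell}-\log\e$ plus bounded terms; this is the source of the leading $-\log\l_{\ell}$. The accompanying $\log\e$ is cancelled by the $-\int_{\alpha_m}\log(-x)\tfrac{dx}{x}$ piece of the main integral, and the leftover $\pi^2$-type real constants die in $\CC/\ZZ(2)$. The remaining piece $\int_{\alpha_m}\log(-x)\,d\log(xy)$, where on $C_{\ul}$ one has $\l_{\ell}xy=(1+x)^{\kappa_m}+y\Psi_m-\sum_{j\neq\ell}(\cdots)$, I evaluate by integration by parts and the residue theorem on $|x|=\e$: the $2\pi\ay$ jump of $\log(-x)$ across $T_{-x}$ together with the Cauchy integral $\tfrac{1}{2\pi\ay}\oint\tfrac{dx}{x}$ convert it into the constant Laurent coefficient of $-\log(1-\Psi_{\ul,\ell})=\sum_{k\ge1}\tfrac1k\Psi_{\ul,\ell}^k$, namely $2\pi\ay\sum_{k\ge1}\tfrac1k[\Psi_{\ul,\ell}^k]_{\underline{0}}$, the series converging by the same size hypotheses. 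Summing the two pieces gives the assertion; as a check, for $g=1$ (local $\PP^2$) one has $\Psi_{\ul,\ell}=-\phi/\l$ and the $[\Psi_{\ul,\ell}^k]_{\underline{0}}$ recover the familiar fundamental-period coefficients.

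The main obstacle will be this last pair of steps: the branch-cut and orientation bookkeeping needed to see that the $\delta_{T_{-x}}$ correction delivers \emph{precisely} $-\log\l_{\ell}$ (rather than merely $\pm\log\l_{\ell}$ modulo $\e$- and $\pi^2$-ambiguities), together with the residue argument identifying the loop integral of $\log(-x)\,d\log(xy)$ with the Laurent constant term $[\,\cdot\,]_{\underline{0}}$ — which for $g>1$ becomes a genuinely multivariable constant-term extraction, since $(v_{\ell},w_{\ell})$ need not be the origin and the auxiliary small parameters $\l_{j\neq\ell}$ must be tracked through the expansion. Everything else is a convergent power-series manipulation once the local solution $y=y(x)$ and the size estimates are in hand.
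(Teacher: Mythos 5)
Your starting point (evaluate the $R_2$-current $R\{-x,-y\}=\log(-x)\tfrac{dy}{y}-2\pi\ay\log(-y)\delta_{T_{-x}}$ on $\alpha_m$, licensed by the Corollary of $\S$4) matches the paper, but from there you diverge: you stay on the curve, solving $\tilde{\Phi}_{\ul}=0$ for a branch $y=y(x)$ over $|x|=\e$ and trying to extract the answer from one-variable residues. The paper instead lifts the symbol to the three-term current $R\{\tilde{\Phi}_{\ul},-x,-y\}$ on the ambient toric surface and integrates the KLM boundary identity over the $3$-chain $\Gamma=\{|x|=\e\geq|y|\}$; since $\Gamma\cap C_{\ul}=\alpha_m$, $\Gamma\cap D$ carries the trivial Tame symbol $\{(1+x)^{c},-x\}$, and $\partial\Gamma$ is the $2$-torus $\{|x|=|y|=\e\}$, Stokes converts $\int_{\alpha_m}R\{-x,-y\}$ into $\tfrac{-1}{2\pi\ay}\int_{|x|=|y|=\e}R\{\l_{\ell}(1-\Psi_{\ul,\ell}),-x,-y\}$, where only the $\log(f_1)\tfrac{df_2}{f_2}\wedge\tfrac{df_3}{f_3}$ term survives and $\tfrac{1}{(2\pi\ay)^2}\int_{|x|=|y|=\e}\Psi_{\ul,\ell}^{k}\,\tfrac{dx}{x}\wedge\tfrac{dy}{y}=[\Psi_{\ul,\ell}^{k}]_{\underline{0}}$ is immediate.

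The step you defer to the end --- identifying your loop integral with the Laurent constant terms --- is not bookkeeping but the entire mathematical content, and as set up it cannot be done directly on the curve: $\Psi_{\ul,\ell}$ is \emph{identically constant} on $C_{\ul}$ (because $\Phi_{\ul}$ vanishes there), so no integral over a cycle lying on $C_{\ul}$ of a function of $\Psi_{\ul,\ell}$ can detect the coefficients $[\Psi_{\ul,\ell}^{k}]_{\underline{0}}$; these live only on the ambient torus $|x|=|y|=\e$, where $\Psi_{\ul,\ell}$ is genuinely small under the stated size hypotheses. Passing from your contour integral of $\log(\mathrm{stuff}(x,y(x)))\tfrac{dx}{x}$ to the two-variable constant term forces you to reintroduce the $y$-contour and control which zeros of $y\mapsto\tilde{\Phi}_{\ul}(x,y)$ lie in $|y|\le\e$ --- which is the paper's Stokes argument in disguise, and for $g>1$ is exactly where the difficulty concentrates, as you concede. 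Relatedly, your leading term comes from $-2\pi\ay\log(-y(\e))$ with $y(\e)\sim(1+\e)^{\kappa_m}/(\l_{\ell}\e)$, which at face value yields $+2\pi\ay\log\l_{\ell}$ plus an uncancelled-as-written $\log\e$; you flag the sign and orientation issues yourself, but they remain unresolved, whereas on the torus the term $\tfrac{-1}{2\pi\ay}\int\log(\l_{\ell})\tfrac{dx}{x}\wedge\tfrac{dy}{y}=-2\pi\ay\log\l_{\ell}$ comes out with no branch or orientation ambiguity. I would replace your curve-side residue plan with the lift-and-Stokes mechanism, which settles both points at once.
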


\begin{proof}
We use the notation $R\{f_1,f_2\}=\log(f_1)\frac{df_1}{f_1}-2\pi\ay\log(f_2)\delta_{T_{f_1}}$ and $R\{f_1,f_2,f_3\}=\log(f_1)\frac{df_2}{f_2}\wedge\frac{df_3}{f_3} + 2\pi\ay\log(f_2)\frac{df_3}{f_3}\delta_{T_{f_1}}+(2\pi\ay)^2\log(f_3)\delta_{T_{f_1}\cap T_{f_2}}$ for $R_2$ and $R_3$ with $f_i$ replacing $z_i$.  Writing $D$ for the bottom-edge divisor in \eqref{fig!}, we have 
$$\mathrm{Tame}_D\{\tilde{\Phi}_{\ul},-x,-y\}=\{\tilde{\Phi}_{\ul}(x,0),-x\}=\{(1+x)^c,-x\} (=1).$$
So writing $\Gamma=\{|x|=\epsilon\geq |y|\}$ ($\implies \alpha_m = \Gamma \cap C_{\ul}$) gives $\mathscr{R}_t(\alpha_m) =$
\begin{flalign*}
\int_{\alpha_m}R\{-x,-y\}&= \int_{\Gamma} R\{-x,-y\}\cdot \delta_{C_{\ul}} \\
&=\tfrac{-1}{2\pi\ay}\int_{\Gamma}d[R\{\tilde{\Phi}_{\ul},-x,-y\}] - \int_{\Gamma} R\{ (1+x)^c,-x\}\cdot \delta_D \\
&= \tfrac{-1}{2\pi\ay}\int_{\partial\Gamma} R\{\tilde{\Phi}_{\ul},-x,-y\}-\cancelto{0}{\int_{|x|=\epsilon} R\{ (1+x)^c,-x\}} \\
&=\tfrac{-1}{2\pi\ay}\int_{|x|=|y|=\epsilon} R\{ x^{-v_{\ell}}y^{-w_{\ell}}\Phi_{\ul},-x,-y\} \\
&=\tfrac{-1}{2\pi\ay} \int_{|x|=|y|=\epsilon} R\{\l (1-\Psi_{\ul,\ell}),-x,-y\} \\
&=\tfrac{-1}{2\pi\ay}\int_{|x|=|y|=\epsilon} \{\log(\l) + \log(1-\Psi_{\ul,\ell})\}\tfrac{dx}{x}\wedge\tfrac{dy}{y} \\
&=-2\pi\ay \log(\l) + 2\pi\ay \sum_{k\geq 1}\int_{|x|=|y|=\epsilon} \Psi_{\ul,\ell}^k \tfrac{dx}{x}\wedge\tfrac{dy}{y} 
\end{flalign*}
modulo $\ZZ(2)$. Here only the first term of $R_3$ enters since  $T_{\l(1-\Psi)}\cap |x|=|y|=\epsilon$ is empty under the given assumptions.
\end{proof}
Returning to the more specific scenario at the beginning of this section, if $g=1$ and $\l_1=:\l=:\tfrac{1}{t}$, then $\Phi_{\l}=\phi(x,y)-\l$ and $\Psi_{\l,1}=t\phi(x,y)$, so that (writing $\mathscr{R}_t$ instead of $\mathscr{R}_{\l}$), Prop. \ref{prop6.1} yields:

\begin{cor} \label{cor6}
If $\Delta$ is reflexive, then the $\alpha_m$ are all homologous $(=:\alpha)$, and $\mathscr{R}_t(\alpha)\underset{\ZZ(2)}{\equiv} 2\pi\ay \left( \log(t)+\sum_{k\geq 1}\frac{[\phi^k]_{\underline{0}}}{k}\right)$ for $t$ small in the right-half-plane.
\end{cor}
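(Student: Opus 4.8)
The plan is to read Corollary~\ref{cor6} as the $g=1$, reflexive specialization of Proposition~\ref{prop6.1}, supplemented by one genuinely geometric input: that the a priori distinct vanishing cycles $\alpha_m$ coincide in $H_1(C_{\ul},\ZZ)$. First I would invoke reflexivity itself: a reflexive $\Delta\subset\RR^2$ has a \emph{unique} interior lattice point, which we may take to be the origin, so $g=1$ and $(v_1,w_1)=(0,0)$. Setting $\l:=\l_1=1/t$ then collapses the multiparameter family of the proposition to the one-parameter family cut out by $\Phi_{\l}=\phi(x,y)-\l$, and the index $\ell$ of Proposition~\ref{prop6.1} is forced to equal $1$ for \emph{every} choice of vertex $p_m$. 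This last point is exactly where reflexivity will do the heavy lifting.

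Next I would substitute into the formula of Proposition~\ref{prop6.1}. Since $(v_1,w_1)=(0,0)$ we have $x^{-v_1}y^{-w_1}\equiv1$ and $\Phi_{\l}=\phi-\l$, whence $\Psi_{\l,1}=t\,\phi(x,y)$ as recorded just before the corollary; moreover the leading term $-\log(\l_\ell)$ becomes $-\log(1/t)=\log(t)$. Because $t$ is constant with respect to $(x,y)$, the constant-term operator gives $[\Psi_{\l,1}^k]_{\underline{0}}=t^k[\phi^k]_{\underline{0}}$, so that $\sum_{k\ge1}\tfrac1k[\Psi_{\l,1}^k]_{\underline{0}}$ is precisely the series displayed in the corollary, now manifestly a convergent power series in $t$ for $|t|$ small. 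The hypotheses ``$\ay\l_\ell\in\mathfrak{H}$, $|\l_\ell|\gg0$'' of the proposition translate under $t=1/\l$ into ``$t$ small in the right half-plane'' (inversion preserves the right half-plane), matching the stated range of validity. This part is pure bookkeeping once Proposition~\ref{prop6.1} is granted.

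The one claim requiring an argument beyond substitution is that \emph{all} the $\alpha_m$ are homologous, so that $\mathscr{R}_t$ takes a single value $\mathscr{R}_t(\alpha)$. Here I would use that for reflexive $\Delta$ the fibre $C_{\ul}$ is a genus-$1$ curve and that, because $\ell\equiv1$, a node forms near \emph{each} vertex $p_m$ in the \emph{same} limit $t\to0$ ($\l\to\infty$): this is the maximally unipotent (large complex structure) degeneration, in which $C_{\ul}$ specializes to a cycle of rational curves (a semistable, Kodaira type $I$ degeneration). For such a degeneration the space of vanishing cycles in $H_1(C_{\ul},\ZZ)$ is one-dimensional, generated by the single pinched loop, so all the $\alpha_m$ represent this common class $\alpha$ up to the sign fixed by orientation. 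Concretely, each $\alpha_m$ maps under the tube map $H_1(C_{\ul},\ZZ)\to H_2(\PP_\Delta\setminus C_{\ul})$ to the torus $\{|x|=|y|=\epsilon\}$ in the chart centred at $p_m$; these tube classes all equal the fundamental class of the standard $2$-torus in $(\CC^*)^2$, since passing between charts only rescales $\epsilon$ and applies a monomial $SL_2(\ZZ)$ automorphism of $(\CC^*)^2$, which preserves $H_2((\CC^*)^2)\cong\ZZ$ and stays within the region $\{|\phi|\ll|\l|\}\subset\PP_\Delta\setminus C_{\ul}$. Since the tube map is injective on vanishing cycles, the $\alpha_m$ agree in $H_1$.

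I expect the homology statement, not the formula, to be the main obstacle. The delicate part is to promote the locally-defined cycles $\alpha_m$ to a single \emph{integral} class (rather than merely identifying them up to monodromy), and to pin down exactly where reflexivity is essential: it forces a single interior point, so that one parameter $\l=1/t$ drives every node, and it forces a genus-$1$ fibre, so that the vanishing-cycle space is one-dimensional. For non-reflexive $\Delta$ — edges carrying interior lattice points, higher genus, or several interior points — different vertices become associated to different parameters $\l_\ell$ and degenerate independently, the $\alpha_m$ need no longer be homologous, and the clean single-valued expression of the corollary breaks down. The proof must therefore use reflexivity essentially at this step, and the cleanest route is to combine the degeneration-type argument with the explicit toric-chart comparison sketched above.
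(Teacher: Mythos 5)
Your proposal is correct and follows the paper's own route: the corollary is obtained there purely by specializing Proposition~\ref{prop6.1} to $g=1$, $(v_1,w_1)=(0,0)$, $\l=1/t$ (so that $\Psi_{\l,1}=t\phi$, $-\log(\l_\ell)=\log(t)$, and the half-plane conditions transfer under inversion), exactly as in your second paragraph. The paper gives no separate argument that the $\alpha_m$ are homologous; your justification via the common tube class $\{|x|=|y|=\epsilon\}$, its invariance under unimodular coordinate changes, and the rank-one space of vanishing cycles in the semistable degeneration is a sound way to supply that implicitly assumed point.
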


It remains to compute $\mathscr{R}_t(\beta)$ for a cycle $\beta$ complementary to $\alpha$ (so that $\ZZ\langle \alpha,\beta\rangle=H_1(E_t,\ZZ)$), which we shall do for the local $\PP^2$ setting only: $\Delta$ the convex hull of $\{ (1,0),(0,1),(-1,-1)\}$, and $\phi=x+y+x^{-1}y^{-1}$. Taking $t>0$ small, write $(0<)\,x_0(t)<x_-(t)<x_+(t)<\infty$ for the branch points of $$E_{t\,(=\l^{-1})} :\; y^2 + (x-\l)y + x^{-1}=0$$over $\PP^1_x$, and $y^{\pm}(x)=\tfrac{1}{2}\{(\l-x)\pm \sqrt{(x-\l)^2-4x^{-1}}\}$. Then $\beta$ [resp. $\alpha$] is given by the difference of paths (on the two branches) between $x_0(t)$ and $x_-(t)$ [resp. $x_-(t)$ and $x_+(t)$].

Now $T_{-x}=\RR_{>0} \subset \PP^1_x$, so taking the $y^+$- [resp. $y^-$-] branch of $\beta$ to run from $x_0$ to $x_-$ [resp. $x_-$ to $x_0$] in $\mathfrak{H}$ [resp. $-\mathfrak{H}$], we have $\beta\cap T_{-x}=(x_0,y_0)\cup (x_-,y_-)$; moreover, $\log(-x)=\log(x)\mp \ay\pi$ on the $y^{\pm}$-branch of $\beta$. The upshot is that
\begin{flalign*}
\int_{\beta} R\{-x,-y\}|_{E_t} &= \int_{\beta} \log(-x)\frac{dy}{y} - 2\pi\ay \sum_{\beta\cap T_{-x}}\log(y) \\
&= -\int_{x_0(t)}^{x_-(t)} \log(x)\mathrm{dlog}\left(\frac{y^+(x)}{y^-(x)}\right) \\
&=\int_{x_0(t)}^{x_-(t)}\log\left(\frac{y^+(x)}{y^-(x)}\right)\frac{dx}{x} \\
&=\int_{x_0(t)}^{x_-(t)} \log\left( \frac{1+\sqrt{1-\xi}}{1-\sqrt{1-\xi}}\right) \frac{dx}{x} 
\end{flalign*}
where $\xi=\frac{4t^2}{x(1-xt)^2}$.  Writing for $\xi \in (0,1)$ $$\log\left(\frac{1+\sqrt{1-\xi}}{1-\sqrt{1-\xi}}\right) + \log\left(\frac{\xi}{4}\right)=:-\sum_{m\geq 1}\alpha_m \xi^m,$$ the above integral decomposes into $$-2\log(t)\int_{x_0}^{x_-}\tfrac{dx}{x}+\int_{x_0}^{x_-}\log(x)\tfrac{dx}{x}+2\int_{x_0}^{x_-}\log(1-xt)\tfrac{dx}{x}-\sum_{m\geq 1}\alpha_m \int_{x_0}^{x_-}\xi^m\tfrac{dx}{x}.$$ Using the approximations $x_0 \simeq 4t^2(1+8t^3)$ and $x_-\simeq t^{-1}(1-2 t^{\frac{3}{2}}-2t^3)$, a lengthy direct computation gives that $$\mathscr{R}_t(\beta)=\tfrac{9}{2}\log^2(t)-\tfrac{\pi^2}{2}+\mathcal{O}(t\log(t)).$$

Let $\delta_t := t\tfrac{d}{dt}$. By a general result of \cite{DK1}, one knows that $\nabla_{\delta_t}\mathscr{R}_t=[\omega_t]$, where $$\omega_t:=\mathrm{Res}_{E_t}\left(\frac{\frac{dx}{x}\wedge \frac{dy}{y}}{1-t\phi(x,y)}\right)$$ has its periods $\omega_t(\gamma):=\int_{\gamma}\omega_t $ annihilated by the Picard-Fuchs operator$$\mathcal{L}=\delta_t^2 - 27t^3 (\delta_t +1)(\delta_t + 2).$$ The regulator periods $\mathscr{R}_t(\gamma)$ are therefore killed by $\mathcal{L}\circ\delta_t$. Since $\mathcal{L}(\cdot)=0$ is known to have basis of solutions
\begin{flalign*}
\pi_1&=\sum_{n\geq 0} a_n t^{3n}\\
\pi_2&=3\log(t)\pi_1 + \sum_{n\geq 1}a_n b_n t^{3n}
\end{flalign*}
with $a_n = \tfrac{(3n)!}{(n!)^3}$ and $b_n = \sum_{k=0}^{n-1} \left( \tfrac{3}{3k+1}+\tfrac{3}{3k+2}-\frac{2}{k+1}\right)$, it now follows that (writing $B_n = b_n -\tfrac{1}{n}$)
\begin{flalign*}
\mathscr{R}_t(\alpha)\underset{\ZZ(2)}{\equiv}& 2\pi\ay \left( \log(t)+\sum_{n\geq 1}\frac{a_n}{3n}t^{3n}\right)\\
\mathscr{R}_t(\beta)\underset{\ZZ(2)}{\equiv}& \frac{9}{2}\log^2(t) + 3\log(t)\sum_{n\geq 1}\frac{a_n}{n}t^{3n}+\sum_{n\geq 1}\frac{a_n B_n}{n}t^{3n}-\frac{\pi^2}{2}\\
\omega_t(\alpha)=\,& 2\pi\ay \sum_{n\geq 0}a_n t^{3n}\\
\omega_t(\beta)=\,& 9\log(t)\sum_{n\geq 0}a_n t^{3n} +3 \sum_{n\geq 1}a_n b_n t^{3n}
\end{flalign*}
for $0<|t|<\frac{1}{3}$. For the truncated normal function, this yields (modulo $\ZZ(2)\otimes \{\omega_t\text{-periods}\}$)
\begin{flalign*}
\nu(t)=&\langle\tfrac{\omega_t}{2\pi\ay},\mathscr{R}_t\rangle = \tfrac{1}{2\pi\ay}\left(\mathscr{R}_t(\alpha)\omega_t(\beta)-\mathscr{R}_t(\beta)\omega_t(\alpha)\right)\\
=&\tfrac{9}{2}\log^2(t)(1+6t^3)+3\log(t)(9t^3)+\tfrac{\pi^2}{2}+(3\pi^2-9)t^3+\mathcal{O}(t^6\log^2 t).
\end{flalign*}

\begin{rem}
This is closely related to computations in \cite{Ho} and \cite{MOY}; the main difference -- and the salient result here -- is the identification of $\frac{\pi^2}{2}$ as the correct torsion offset for our motivically defined $\nu$.
\end{rem}

\curraddr{${}$\\
\noun{Department of Mathematics, Campus Box 1146}\\
\noun{Washington University in St. Louis}\\
\noun{St. Louis, MO} \noun{63130, USA}}

\email{${}$\\
\emph{e-mail}: matkerr@math.wustl.edu}

\curraddr{\noun{${}$}\\
\noun{School of Mathematical Sciences}\\
\noun{University of Science and Technology of China}\\
\noun{Hefei, 230026, CHINA}}

\email{\emph{${}$}\\
\emph{e-mail}: limuxi@ustc.edu.cn}

\begin{thebibliography}{Be1}
\bibitem[Be]{Be1} A. Beilinson, \emph{Higher regulators and values
of $L$-functions}, J. Soviet Math. 30 (1985), 2036-2070.


\bibitem[Bl1]{Bl} S. Bloch, \emph{Algebraic cycles and higher $K$-theory},
Adv. Math 61 (1986), 267-304.

\bibitem[Bl2]{Bl2} ---------, \emph{The moving lemma for higher Chow groups}, J. Alg. Geom. 3 (1994), 493-535.

\bibitem[Bl3]{Bl3} ---------, \emph{Algebraic cycles and the Beilinson conjectures}, Contemp. Math. 58 (1986), 65-79.

\bibitem[Bl4]{Bl4} ---------, \emph{Some notes on elementary properties of higher chow groups, including functoriality properties and cubical chow groups}, available at http://www.math.uchicago.edu/~bloch/publications.html.

\bibitem[BKV1]{BKV1} S. Bloch, M. Kerr and P. Vanhove, \emph{A Feynman integral via higher normal functions}, Compositio Math. 151 (2015), 2329-2375.

\bibitem[BKV2]{BKV2} ---------, \emph{Local mirror symmetry and the sunset Feynman integral}, Adv. Theor. Math. Phys. 21 (2017), 1373-1453.

\bibitem[Bo1]{Bo1} A. Borel, \emph{Stable real cohomology of arithmetic groups}, Ann. Sci. \'Ecole Norm. Sup. 7 (1974), 235-272.

\bibitem[Bo2]{Bo2} A. Borel, \emph{Cohomologie de $\mathrm{SL}_{n}$
et valeurs de fonctions zeta aux points entiers}, Ann. Scuola Norm.
Sup. Pisa Cl. Sci. 4 (1977), 613-636.

\bibitem[Bu]{Bu} J. Burgos Gil, ``The regulators of Beilinson and
Borel'', CRM Monograph Series 15, AMS, Providence, 2002.

\bibitem[7K]{7K} P. del Angel, C. Doran, J. Iyer, M. Kerr, J. Lewis, S. M\"uller-Stach, and D. Patel, \emph{Specialization of cycles and the $K$-theory elevator}, to appear in CNTP, available at arXiv:1704.04779. 

\bibitem[DK1]{DK1} C. Doran and M. Kerr, \emph{Algebraic $K$-theory of toric hypersurfaces}, CNTP 5 (2011), 397-600.

\bibitem[DK2]{DK2} ---------, \emph{Algebraic cycles and local quantum cohomology}, CNTP 8 (2014), 703-727.


\bibitem[Gr]{Gr} P. Griffiths, \emph{On the periods of certain rational integrals II}, Ann. Math. 90 (1969), 496-541.

\bibitem[HW]{HW} C. Haesemeyer and C. Weibel, ``The norm residue theorem in motivic cohomology'', book preprint, available at http://sites.math.rutgers.edu/~weibel/BK.pdf.

\bibitem[Ho]{Ho} S. Hosono, \emph{Central charges, symplectic forms, and hypergeometric series in local
mirror symmetry}, in ``Mirror Symmetry V'' (Lewis, Yau, Yui, eds.), pp. 405-440,
AMS/IP Stud. Adv. Math. 38, 2006.

\bibitem[Ke1]{Ke1} M. Kerr, ``Geometric construction of regulator currents with applications to algebraic cycles'', Princeton Univ. Ph.D. Thesis, 2003.

\bibitem[Ke2]{Ke} ---------, \emph{A regulator formula for Milnor $K$-groups}, $K$-Theory 29 (2003), 175-210.

\bibitem[Ke3]{css} ---------, \emph{Counting, sums, and series}, to appear in Alberta Summer Math. Inst. volume, available at http://www.math.wustl.edu/~matkerr.

\bibitem[KL]{KL} M. Kerr and J. Lewis, \emph{The Abel-Jacobi map
for higher Chow groups, II}, Invent. Math. 170 (2007), 355-420.

\bibitem[KLM]{KLM} M. Kerr, J. Lewis, and S. M\"uller-Stach, \emph{The
Abel-Jacobi map for higher Chow groups}, Compos. Math. 142 (2006),
374-396.

\bibitem[KY]{KY} M. Kerr and Y. Yang, \emph{An explicit basis for the rational higher Chow groups of abelian number fields}, Ann. K-theory 3 (2018), 173-191.

\bibitem[Le]{Le} M. Levine, \emph{Bloch's higher Chow groups revisited},
in ``$K$-theory (Strasbourg, 1992)'', Asterisque 226 (1994), pp.
235-320.

\bibitem[Li]{Li} M. Li, \emph{Integral regulators on higher Chow complexes},
preprint, arXiv:1805.04646.

\bibitem[MOY]{MOY} K. Mohri, Y. Onjo, and S.-K. Yang, \emph{Closed sub-monodromy problems, local mirror
symmetry and branes on orbifolds}, Rev. Math. Phys. 13 (2001), no. 6, 675-715.



\bibitem[Pe]{Pe2} O. Petras, \emph{Functional equations of the dilogarithm
in motivic cohomology}, J. Number Theory 129 (2009), 2346-2368.


\bibitem[Vo]{Vo} V. Voevodsky, \emph{Motivic cohomology groups are isomorphic to higher chow groups in any characteristic}, Int. Math. Res. Not. 2002, Issue 7, 351-355.

\bibitem[We]{We} C. Weibel, ``The K-book: an introduction to algebraic K-theory'', AMS Grad. Studies in Math. v. 145, 2013.


\end{thebibliography}
\end{document}